\journal{\phantom{oo}}
\def \dw {\Delta_{\omega}}
\def \d1 {\Delta_{1}}
\newcommand{\R}{\ensuremath{\mathbb{R}}}
\newcommand{\N}{\ensuremath{\mathbb{N}}}
\newtheorem{theorem}{Theorem}
\newtheorem{remark}[theorem]{Remark}
\newtheorem{lemma}[theorem]{Lemma}
\newtheorem{proposition}[theorem]{Proposition}
\newtheorem{definition}[theorem]{Definition}
\begin{document}

\begin{frontmatter}


\title{On multiple $\Delta_{\omega}$-Appell polynomials}

\author[Cam1]{P. Njionou Sadjang\corref{cor1}}
\author[Cam2]{S. Mboutngam}

\fntext[k1]{Corresponding author: pnjionou@yahoo.fr.}
\address[Cam1]{Faculty of Industrial Engineering, University of Douala, Cameroon}
\address[Cam2]{Higher Teachers' Training College, University of Maroua, Maroua, Cameroon}

\begin{abstract}
In this paper, we give a generating function for Multiple Charlier polynomials and deduce several consequences for these polynomials as invertion formula, connection formula, addition formula and recurrences relations they satisfy. Next, we introduce the notion of multiple $\Delta_{\omega}$ polynomials  and prove several equivalent conditions for this class of polynomials. Also, we give a characterization theorem that if multiple $\Delta_1$ polynomials are also multiple orthogonal, then they are the multiple Charlier polynomials.
\end{abstract}

\begin{keyword}
Charlier polynomials, Appell polynomial set, Multiple orthogonal polynomials, Multiple discrete Appell polynomials, Generating functions, Recurrence relation.
\MSC[2010]  33C45\sep 42C05\sep 05A15 \sep 33C65.
\end{keyword}

\end{frontmatter}


\section{Introduction}

\noindent A sequence  $\{P_n(x)\}_{n=0}^{\infty}$ of $\dw$-Appell polynomials (see \cite[Definition 1.2.]{cheikh}) is a sequence of polynomials satisfying the difference property
\begin{equation}
\dw P_{n+1}(x)=(n+1)P_n(x),\quad n\geq 0,
\end{equation}
where
\begin{equation*}
\dw f(x)=\dfrac{f(x+\omega)-f(x)}{\omega},\quad \omega\neq 0,
\end{equation*}
Several properties for these polynomials are well known. Among them, the most important characterizations may be the following equivalent conditions \cite{cheikh}.\\
\noindent {\bf Theorem A.}\emph{
Let $\{P_n(x)\}_{n=0}^{\infty}$ be a sequence of polynomials. Then the following are all equivalent:
\begin{enumerate}
   \item $\{P_n(x)\}_{n=0}^{\infty}$ is a $\dw$-Appell polynomials set.
   \item There exists a sequence $(a_k)_{k\geq 0}$; independant of $n$; $a_0\neq 0$; such that
   \[P_n(x)=\sum_{k=0}^{n}a_k\dfrac{n!}{(n-k)!}x^{(n-k,\omega)}.\]
   \item $\{P_n(x)\}_{n=0}^{\infty}$ is generated by
   \[ A(t)(1+\omega t)^{\frac{x}{\omega}}=\sum_{n=0}^{\infty}P_n(x)\dfrac{t^n}{n!},\]
   with
   \[A(t)=\sum_{k=0}^{\infty}a_kt^k,\quad a_0\neq 0.\]
\end{enumerate}
where $\{x^{(n,\omega)}\}_{n\geq 0}$ is the polynomial set defined by
\[x^{(n,\omega)}=\left\{\begin{array}{ll}
x(x-\omega)(x-2\omega)\cdots(x-(n-1)\omega),& n=1,2,\ldots\\
1& n=0
\end{array}\right..\]}

\noindent {Note that for $\omega=1$, we will adopt the classical notation $x^{(n,1)}=x^{(n)}$ and the operator $\Delta_1$ will be denoted by $\Delta$.}

\noindent Among different generalizations of orthogonal polynomials,  multiple orthogonal polynomials are one of the most important.  They satisfy orthogonality conditions with respect to $r\in\N$ measures $\mu_1,\cdots \mu_r$ \cite{aptekarev98,aptekarev2003,arvesu,arvesu2,beckermann,bender,coussement01,nikishin}. The nonnegative $r$ represents  the number of weights. The origin of multiple orthogonal polynomial set goes back to Angelesco's paper dealing with simultaneaous Padé approximation \cite{angelesco}, in particular in Hermite-Pad\'e approximation of a system of $r$ (Markov) functions (see for instance \cite{bruin85,bruin90,mahler}). Some multiple discrete orthogonal polynomials are studied in \cite{arvesu} including multiple Charlier, multiple Meixner, multiple Kravchuk and multiple Hahn polynomials.\\
%
Some examples of $\Delta_1$-Appell polynomial sequence are $\{x^{(n,1)}\}_{n=0}^{\infty}$ and the Charlier polynomials. In particular, Charlier polynomials form the unique sequence of $\Delta_1$-Appell polynomials that is at the same time orthogonal with respect to a discrete measure (see for instance \cite{carlitz}). 

In this paper, section {\bf 2} provides a generating functions for the multiple Charlier polynomials. Next, this generating function is used to establish the inversion formula, the connection formulas, the addition formulas, and some recurrence relations for the multiple Charlier polynomials. Note that one of these recurrence relation already appears in the literature as we mentioned below. In section {\bf 3}. we introduced the notion of multiple $\Delta_\omega$ Appell polynomial sequence and prove several characterizations of these sequences. It is seen that the multiple Charlier polynomials are and example of such sequences. In section {\bf 4}, we prove that the only multiple orthogonal polynomial sequence which is also multiple $\Delta_1$-Appell are Charlier polynomials. This provides a new characterization result for the multiple Charlier polynomials. Finally section {\bf 5} contains some general results. To the best of our knowledge, except the results of theorem \ref{theo1}, which is obtained here by using the generating functions, the rest of results are new.

\section{Generating function for the multiple Charlier polynomials and consequences}

\noindent In the case where
\[ \mu_i(x)=\sum_{k=0}^{\infty}W^{(a_i)}(k)\delta(x-k),\quad i=1,2,\ldots,r\]
where $a_i>0$ and
\[W^{(a_i)}(x)=\left\{\begin{array}{ll}
\dfrac{a_i^x}{\Gamma(x+1)},& \textrm{if}\;\; x=0,1,\ldots\\
0&\textrm{otherwise}
\end{array}\right.,\]
we have the multiple Charlier polynomials. It is proved in \cite{lee2008} that for the multiple Charlier polynomials $\left\{C_{\vec{n}}^{(\vec{a})}(x)\right\}_{|\vec{n}|=0}^{\infty}$ , the following difference rule applies
\begin{equation}\label{appell111}
\Delta_1 C_{\vec{n}}^{(\vec{a})}(x)=\sum_{i=1}^{r}n_iC_{\vec{n}-\vec{e}_i}^{(\vec{a})}(x).
\end{equation}
For $r=2$, the system is
$\left\{C_{n_1,n_2}^{(a_1,a_2)}(x)\right\}_{n_1+n_2=0}^{\infty}$ and \eqref{appell111} becomes
\begin{equation}\label{char-app}
\Delta C_{n_1,n_2}^{(a_1,a_2)}(x)=n_1 C_{n_1-1,n_2}^{(a_1,a_2)}(x)+n_2C_{n_1,n_2-1}^{(a_1,a_2)}(x).
\end{equation}
Note also that the multiple Charlier polynomials have the representation \cite{arvesu}
\begin{equation}\label{multi-charlier-2}
C_{n_1,n_2}^{(a_1,a_2)}(x)= \sum_{k=0}^{n_1}\sum_{\ell=0}^{n_2}\binom{n_1}{k}\binom{n_2}{\ell}(-a_1)^{n_1-k}(-a_2)^{n_2-\ell}x^{(k+\ell,1)}.
\end{equation}

\noindent  Before we state the main theorem of this section, we recall the following important lemma.
\begin{lemma}\emph{(See \cite[Lemma 10]{rainville})}\label{lemma-sum}
The following relations apply:
\begin{eqnarray}
   \sum_{n=0}^{\infty}\sum_{k=0}^{\infty}A(k,n)&=&\sum_{n=0}^{\infty}\sum_{k=0}^{n}A(k,n-k)\\
   \sum_{n=0}^{\infty}\sum_{k=0}^{n}B(k,n)&=&\sum_{n=0}^{\infty}\sum_{k=0}^{\infty}B(k,n+k).
\end{eqnarray}
\end{lemma}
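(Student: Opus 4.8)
The plan is to prove both identities as rearrangements of a single doubly-indexed family over the lattice $\N\times\N$, the one point requiring care being the justification that such a rearrangement is permissible. Since in this paper the lemma is applied to (formal) power series in $t$, at each fixed power of $t$ only finitely many terms contribute, so the rearrangement reduces to a finite reindexing; alternatively, one may assume the double series converges absolutely, in which case Fubini's theorem for series guarantees that any relabelling of the index set by a bijection leaves the total sum unchanged. I would state this once at the outset and then argue purely combinatorially.

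For the first relation, I would read the left-hand side as the sum of $A(k,n)$ over all pairs $(k,n)$ with $k,n\geq 0$, and reorganise it along antidiagonals. Setting $m=k+n$, the pairs sharing a fixed value of $m$ are exactly $\{(k,m-k):0\leq k\leq m\}$, and as $m$ ranges over $\N$ these antidiagonals partition $\N\times\N$. Summing first over each antidiagonal and then over $m$, and finally renaming $m$ back to $n$, produces $\sum_{n=0}^{\infty}\sum_{k=0}^{n}A(k,n-k)$, which is the right-hand side. For the second relation I would run the same correspondence in reverse: first interchange the order of summation in $\sum_{n=0}^{\infty}\sum_{k=0}^{n}B(k,n)$ so as to sum over the triangular region $0\leq k\leq n$ in the form $\sum_{k=0}^{\infty}\sum_{n=k}^{\infty}B(k,n)$; then in the inner sum substitute $n=k+j$ with $j\geq 0$, obtaining $\sum_{k=0}^{\infty}\sum_{j=0}^{\infty}B(k,k+j)$; and finally interchange the two now-independent summations and rename $j$ as $n$ to reach $\sum_{n=0}^{\infty}\sum_{k=0}^{\infty}B(k,n+k)$.

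Viewed together, the two displayed formulas are inverse to one another, both encoding the single bijection $(k,j)\mapsto(k,k+j)$ between $\N\times\N$ and the triangular region $\{(k,n):0\leq k\leq n\}$. The main, and essentially only, obstacle is bookkeeping, namely verifying that each index substitution is a genuine bijection of the relevant index set and that the order-of-summation interchanges are legitimate; both facts are transparent in the formal power series framework in which the lemma is subsequently used, and follow from absolute convergence otherwise. I therefore expect no conceptual difficulty, only the need to present the reindexing cleanly.
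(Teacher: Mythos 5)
Your proof is correct: the antidiagonal substitution $m=k+n$ for the first identity and the inverse bijection $(k,j)\mapsto(k,k+j)$ for the second, together with your upfront justification of the rearrangement (formal power series with finitely many contributions per power of $t$, or absolute convergence via Fubini), is exactly the standard argument behind this lemma. The paper itself offers no proof, citing \cite[Lemma 10]{rainville} instead, and your argument is precisely the one given in Rainville's book, so there is nothing to add.
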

\noindent Now we give a generating function for the multiple Charlier polynomials $\{C_{n_1,n_2}^{(a_1,a_2)}(x)\}_{n_1+n_2=0}^{\infty}$.

\begin{theorem}
The multiple Charlier polynomials $C_{n_1,n_2}^{(a_1,a_2)}(x)$ have the following generating function:
\begin{equation}\label{genfunction1}
e^{-(a_1t_1+a_2t_2)}\big(1+t_1+t_2\big)^x=\sum_{n_1=0}^{\infty}\sum_{n_2=0}^{\infty}C_{n_1,n_2}^{(a_1,a_2)}(x)\dfrac{t_1^{n_1}}{n_1!}\dfrac{t_2^{n_2}}{n_2!}.
\end{equation}
\end{theorem}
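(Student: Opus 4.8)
The plan is to work backwards from the right-hand side of \eqref{genfunction1}: insert the explicit representation \eqref{multi-charlier-2}, and then rearrange the resulting fourfold sum until the exponential prefactor and the power $(1+t_1+t_2)^x$ separate out. First I would substitute \eqref{multi-charlier-2} and rewrite the binomial-over-factorial combinations as $\frac{1}{n_1!}\binom{n_1}{k}=\frac{1}{k!\,(n_1-k)!}$ and $\frac{1}{n_2!}\binom{n_2}{\ell}=\frac{1}{\ell!\,(n_2-\ell)!}$, so that the summand factors into a piece depending on $(k,n_1)$, a piece depending on $(\ell,n_2)$, and the coupling factor $x^{(k+\ell,1)}$.

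The key manoeuvre is to apply the second identity of Lemma \ref{lemma-sum} to each of the two double sums $\sum_{n_1}\sum_{k=0}^{n_1}$ and $\sum_{n_2}\sum_{\ell=0}^{n_2}$, reindexing by $m_1=n_1-k$ and $m_2=n_2-\ell$. After this shift, the sums over $m_1$ and $m_2$ decouple from everything else and collapse via the exponential series: $\sum_{m_1\ge 0}\frac{(-a_1t_1)^{m_1}}{m_1!}=e^{-a_1t_1}$ and likewise $e^{-a_2t_2}$, producing exactly the factor $e^{-(a_1t_1+a_2t_2)}$. What survives is the double sum $\sum_{k,\ell\ge 0}\frac{t_1^k}{k!}\frac{t_2^\ell}{\ell!}\,x^{(k+\ell,1)}$, which must be shown to equal $(1+t_1+t_2)^x$.

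For that last step I would invoke the first identity of Lemma \ref{lemma-sum} to regroup terms according to $n=k+\ell$; the inner sum becomes $\frac{x^{(n,1)}}{n!}\sum_{k=0}^{n}\binom{n}{k}t_1^k t_2^{\,n-k}=\frac{x^{(n,1)}}{n!}(t_1+t_2)^n$ by the ordinary binomial theorem, and summing over $n$ gives $(1+(t_1+t_2))^x$ through Newton's generalized binomial series $(1+u)^x=\sum_{n\ge 0}\binom{x}{n}u^n$, since $\binom{x}{n}=x^{(n,1)}/n!$. Combining the three factors yields the left-hand side of \eqref{genfunction1}. The computation is routine once the index shifts are set up; the only genuine difficulty is justifying the interchanges and reindexings of the iterated sums, which is precisely what Lemma \ref{lemma-sum} is designed to handle, so I would lean on it at both stages rather than rearranging by hand.
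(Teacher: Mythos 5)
Your proposal is correct and follows essentially the same route as the paper's own proof: substituting the explicit representation \eqref{multi-charlier-2}, reindexing the double sums via Lemma \ref{lemma-sum} to extract $e^{-a_1t_1}$ and $e^{-a_2t_2}$, and identifying the surviving sum $\sum_{k,\ell\ge 0}x^{(k+\ell,1)}\frac{t_1^k}{k!}\frac{t_2^\ell}{\ell!}$ with $(1+t_1+t_2)^x$. If anything, you are slightly more careful than the paper, which asserts this last identification without the regrouping by $n=k+\ell$ and the generalized binomial series that you spell out.
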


\begin{proof}
In this proof we will use several times Lemma \ref{lemma-sum}. From Equation \eqref{multi-charlier-2}, we have:
\begin{eqnarray*}
&&\sum_{n_1=0}^{\infty}\sum_{n_2=0}^{\infty}C_{n_1,n_2}^{(a_1,a_2)}(x)\dfrac{t_1^{n_1}}{n_1!}\dfrac{t_2^{n_2}}{n_2!}\\
&&\hspace*{.5cm}= \sum_{n_1=0}^{\infty}\sum_{n_2=0}^{\infty}\left( \sum_{k=0}^{n_1}\sum_{\ell=0}^{n_2}\binom{n_1}{k}\binom{n_2}{\ell}(-a_1)^{n_1-k}(-a_2)^{n_2-\ell}x^{(k+\ell,1)}  \right)\dfrac{t_1^{n_1}}{n_1!}\dfrac{t_2^{n_2}}{n_2!}\\
&&\hspace*{.5cm}= \sum_{n_1=0}^{\infty}\sum_{k=0}^{n_1}\left(\sum_{n_2=0}^{\infty} \sum_{\ell=0}^{n_2}\binom{n_2}{\ell}(-a_2)^{n_2-\ell}x^{(k+\ell,1)} \dfrac{t_2^{n_2}}{n_2!} \right)(-a_1)^{n_1-k}\binom{n_1}{k}\dfrac{t_1^{n_1}}{n_1!}\\
&&\hspace*{.5cm}= \sum_{n_1=0}^{\infty}\sum_{k=0}^{n_1}\left(\sum_{n_2=0}^{\infty} \sum_{\ell=0}^{\infty}\binom{n_2+\ell}{\ell}(-a_2)^{n_2}x^{(k+\ell,1)} \dfrac{t_2^{n_2+\ell}}{(n_2+\ell)!} \right)(-a_1)^{n_1-k}\binom{n_1}{k}\dfrac{t_1^{n_1}}{n_1!}\\
&&\hspace*{.5cm}= \sum_{n_1=0}^{\infty}\sum_{k=0}^{n_1}
\left(\sum_{\ell=0}^{\infty} \left[\sum_{n_2=0}^{\infty}\dfrac{(-a_2t_2)^{n_2}}{n_2!}\right]\dfrac{t_2^{\ell}}{\ell!} x^{(k+\ell,1)} \right)
(-a_1)^{n_1-k}\binom{n_1}{k}\dfrac{t_1^{n_1}}{n_1!}\\
&&\hspace*{.5cm}= e^{-a_2t_2}\sum_{n_1=0}^{\infty}\sum_{k=0}^{n_1}
\left(\sum_{\ell=0}^{\infty} \dfrac{t_2^{\ell}}{\ell!} x^{(k+\ell,1)} \right)
(-a_1)^{n_1-k}\binom{n_1}{k}\dfrac{t_1^{n_1}}{n_1!}\\
&&\hspace*{.5cm}= e^{-a_2t_2}\sum_{\ell=0}^{\infty}\left(\sum_{n_1=0}^{\infty}\sum_{k=0}^{n_1}
(-a_1)^{n_1-k}\binom{n_1}{k}\dfrac{t_1^{n_1}}{n_1!}\right) \dfrac{t_2^{\ell}}{\ell!} x^{(k+\ell,1)}\\
&&\hspace*{.5cm}= e^{-a_2t_2}\sum_{\ell=0}^{\infty}\left(\sum_{n_1=0}^{\infty}\sum_{k=0}^{\infty}
(-a_1)^{n_1}\binom{n_1+k}{k}\dfrac{t_1^{n_1+k}}{(n_1+k)!}\right) \dfrac{t_2^{\ell}}{\ell!} x^{(k+\ell,1)}\\
&&=\hspace*{.5cm}e^{-(a_1t_1+a_2t_2)}\sum_{k=0}^{\infty}\sum_{\ell=0}^{\infty}x^{(k+\ell,1)}\dfrac{t_1^{k}}{k!}\dfrac{t_2^{\ell}}{\ell!}=e^{-(a_1t_1+a_2t_2)}\big(1+t_1+t_2\big)^x.
\end{eqnarray*}
The theorem is then proved.
\end{proof}

\begin{theorem}
The multiple Charlier polynomials fulfill the following inversion formula
\begin{equation}
x^{(n_1+n_2,1)}=\sum_{k=0}^{n_1}\sum_{\ell=0}^{n_2}\binom{n_1}{k}\binom{n_2}{\ell}(a_1)^{n_1-k}(a_2)^{n_2-\ell}C_{k,\ell}^{(a_1,a_2)}(x).
\end{equation}
\end{theorem}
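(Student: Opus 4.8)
The plan is to invert the generating function \eqref{genfunction1} algebraically by multiplying through by the exponential factor and comparing coefficients, rather than manipulating the explicit representation \eqref{multi-charlier-2} directly. The point is that \eqref{genfunction1} already encodes the Charlier polynomials as coefficients, so dividing out the factor $e^{-(a_1t_1+a_2t_2)}$ should recover $x^{(n_1+n_2,1)}$ as a combination of the $C_{k,\ell}^{(a_1,a_2)}(x)$.

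First I would rewrite \eqref{genfunction1} by multiplying both sides by $e^{a_1t_1+a_2t_2}$. Since $e^{a_1t_1+a_2t_2}e^{-(a_1t_1+a_2t_2)}=1$, the left-hand side collapses to the clean relation
\begin{equation*}
\big(1+t_1+t_2\big)^x=e^{a_1t_1+a_2t_2}\sum_{n_1=0}^{\infty}\sum_{n_2=0}^{\infty}C_{n_1,n_2}^{(a_1,a_2)}(x)\dfrac{t_1^{n_1}}{n_1!}\dfrac{t_2^{n_2}}{n_2!}.
\end{equation*}
On the left I would use the binomial expansion already recorded at the end of the proof of the previous theorem, namely $(1+t_1+t_2)^x=\sum_{k=0}^{\infty}\sum_{\ell=0}^{\infty}x^{(k+\ell,1)}\frac{t_1^{k}}{k!}\frac{t_2^{\ell}}{\ell!}$, so that the coefficient of $t_1^{n_1}t_2^{n_2}$ on the left is simply $x^{(n_1+n_2,1)}/(n_1!\,n_2!)$.

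Next, on the right-hand side I would expand $e^{a_1t_1+a_2t_2}=\sum_{i\geq0}\sum_{j\geq0}\frac{(a_1t_1)^i}{i!}\frac{(a_2t_2)^j}{j!}$ and form the Cauchy product with the Charlier series, treating each variable separately. Using Lemma \ref{lemma-sum} to reindex $i+k=n_1$ and $j+\ell=n_2$, the coefficient of $\frac{t_1^{n_1}}{n_1!}\frac{t_2^{n_2}}{n_2!}$ becomes $\sum_{k=0}^{n_1}\sum_{\ell=0}^{n_2}\binom{n_1}{k}\binom{n_2}{\ell}a_1^{n_1-k}a_2^{n_2-\ell}C_{k,\ell}^{(a_1,a_2)}(x)$, the binomial factors arising precisely from converting $\frac{1}{(n_1-k)!\,k!}$ and $\frac{1}{(n_2-\ell)!\,\ell!}$ into $\binom{n_1}{k}/n_1!$ and $\binom{n_2}{\ell}/n_2!$. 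Equating the two coefficients of $t_1^{n_1}t_2^{n_2}$ and clearing $n_1!\,n_2!$ then yields the claimed formula.

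The step requiring the most care is the double Cauchy product: I must apply the reindexing of Lemma \ref{lemma-sum} once in each variable and keep careful track of the factorials so that the combinatorial factors assemble cleanly into the two binomial coefficients. Everything else is formal power series bookkeeping, and no convergence issue arises since we work at the level of formal series in $t_1,t_2$. It is worth remarking that this inversion is exactly the structural mirror of the representation \eqref{multi-charlier-2}: the roles of $x^{(n_1+n_2,1)}$ and $C_{k,\ell}^{(a_1,a_2)}(x)$ are interchanged and each $-a_i$ is replaced by $a_i$, which is the expected behaviour for a pair of mutually inverse Appell-type expansions.
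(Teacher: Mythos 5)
Your proposal is correct and is essentially the paper's own argument: the paper likewise writes $(1+t_1+t_2)^x=e^{(a_1t_1+a_2t_2)}e^{-(a_1t_1+a_2t_2)}(1+t_1+t_2)^x$, substitutes the generating function \eqref{genfunction1}, expands the exponential, and compares coefficients of $\frac{t_1^{n_1}}{n_1!}\frac{t_2^{n_2}}{n_2!}$ via the Cauchy product, using the falling-factorial expansion of $(1+t_1+t_2)^x$ on the left. The only cosmetic difference is that the paper first obtains the sum with $a_1^{k}a_2^{\ell}C^{(a_1,a_2)}_{n_1-k,n_2-\ell}(x)$ and then reindexes $k\mapsto n_1-k$, $\ell\mapsto n_2-\ell$, whereas you build the reindexing directly into the Cauchy product.
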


\begin{proof}
Using the generating function \eqref{genfunction1}, we have
\begin{eqnarray*}
&&\sum_{n_1=0}^{\infty}\sum_{n_2=0}^{\infty}x^{(n_1+n_2,1)}\dfrac{t_1^{n_1}}{n_1!}\dfrac{t_2^{n_2}}{n_2!}= (1+t_1+t_2)^x\\
&&\hspace*{1.5cm} =e^{(a_1t_1+a_2t_2)}e^{-(a_1t_1+a_2t_2)}\big(1+t_1+t_2\big)^x\\
&&\hspace*{1.5cm} =e^{(a_1t_1+a_2t_2)}\sum_{n_1=0}^{\infty}\sum_{n_2=0}^{\infty}C_{n_1,n_2}^{(a_1,a_2)}(x)\dfrac{t_1^{n_1}}{n_1!}\dfrac{t_2^{n_2}}{n_2!}\\
&&\hspace*{1.5cm}
=\left(\sum_{k=0}^{\infty}\sum_{\ell=0}^{\infty}\dfrac{(a_1t_1)^k}{k!}\dfrac{(a_2t_2)^\ell}{\ell!}
\right)
\left( \sum_{n_1=0}^{\infty}\sum_{n_2=0}^{\infty}C_{n_1,n_2}^{(a_1,a_2)}(x)\dfrac{t_1^{n_1}}{n_1!}\dfrac{t_2^{n_2}}{n_2!}  \right)\\
&&\hspace*{1.5cm} =\sum_{n_1=0}^{\infty}\sum_{n_2=0}^{\infty}\left\{\sum_{k=0}^{n_1}\sum_{\ell=0}^{n_2}\binom{n_1}{k}\binom{n_2}{\ell}(a_1)^k(a_2)^\ell C^{(a_1,a_2)}_{n_1-k,n_2-\ell}(x)\right\}\dfrac{t_1^{n_1}}{n_1!}\dfrac{t_2^{n_2}}{n_2!}.
\end{eqnarray*}
Collecting the coefficients of $\dfrac{t_1^{n_1}}{n_1!}\dfrac{t_2^{n_2}}{n_2!}$, we obtain finally that
\begin{eqnarray*}
x^{(n_1+n_2,1)}&=& \sum_{k=0}^{n_1}\sum_{\ell=0}^{n_2}\binom{n_1}{k}\binom{n_2}{\ell}(a_1)^k(a_2)^{\ell}C^{(a_1,a_2)}_{n_1-k,n_2-\ell}(x)\\
&=& \sum_{k=0}^{n_1}\sum_{\ell=0}^{n_2}\binom{n_1}{k}\binom{n_2}{\ell}(a_1)^{n_1-k})(a_2)^{n_2-\ell}C^{(a_1,a_2)}_{k,\ell}(x).
\end{eqnarray*}
This ends the proof of the theorem.
\end{proof}

\begin{theorem}
The multiple Charlier polynomials $C_{n_1,n_2}^{(a_1,a_2)}(x)$ and $C_{n_1,n_2}^{(b_1,b_2)}(x)$ fulfill the following connection formula
\begin{equation}
C_{n_1,n_2}^{(b_1,b_2)}(x)=\sum_{k=0}^{n_1}\sum_{\ell=0}^{n_2}\binom{n_1}{k}\binom{n_2}{\ell}(a_1-b_1)^k(a_2-b_2)^\ell C^{(a_1,a_2)}_{n_1-k,n_2-\ell}(x).
\end{equation}
\end{theorem}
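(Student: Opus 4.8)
The plan is to mimic the proof of the inversion formula, exploiting the generating function \eqref{genfunction1}. The key observation is that the generating functions of the two families $C_{n_1,n_2}^{(b_1,b_2)}$ and $C_{n_1,n_2}^{(a_1,a_2)}$ differ only in their exponential prefactor, while the factor $(1+t_1+t_2)^x$ is common to both. Hence I would write
\[
e^{-(b_1t_1+b_2t_2)}(1+t_1+t_2)^x = e^{(a_1-b_1)t_1+(a_2-b_2)t_2}\,e^{-(a_1t_1+a_2t_2)}(1+t_1+t_2)^x,
\]
so that, by \eqref{genfunction1}, the left-hand side is the generating function of the $b$-family, whereas the last factor on the right is the generating function of the $a$-family.

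Next, I would expand the correcting exponential as a product of two ordinary exponential series,
\[
e^{(a_1-b_1)t_1+(a_2-b_2)t_2}=\left(\sum_{k=0}^{\infty}\frac{\big((a_1-b_1)t_1\big)^k}{k!}\right)\left(\sum_{\ell=0}^{\infty}\frac{\big((a_2-b_2)t_2\big)^\ell}{\ell!}\right),
\]
and multiply it with the series $\sum_{n_1,n_2}C_{n_1,n_2}^{(a_1,a_2)}(x)\frac{t_1^{n_1}}{n_1!}\frac{t_2^{n_2}}{n_2!}$. Carrying out the Cauchy product separately in the variables $t_1$ and $t_2$, and using $\frac{1}{k!\,(n_1-k)!}=\frac{1}{n_1!}\binom{n_1}{k}$ together with the analogous identity for $\ell$, the coefficient of $\frac{t_1^{n_1}}{n_1!}\frac{t_2^{n_2}}{n_2!}$ on the right becomes exactly $\sum_{k=0}^{n_1}\sum_{\ell=0}^{n_2}\binom{n_1}{k}\binom{n_2}{\ell}(a_1-b_1)^k(a_2-b_2)^\ell C_{n_1-k,n_2-\ell}^{(a_1,a_2)}(x)$.

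Finally, comparing this with the coefficient of $\frac{t_1^{n_1}}{n_1!}\frac{t_2^{n_2}}{n_2!}$ on the left-hand side, which is $C_{n_1,n_2}^{(b_1,b_2)}(x)$ by \eqref{genfunction1} applied to the $b$-family, yields the claimed connection formula. I do not expect any genuine obstacle here: the argument is a direct and formal manipulation of absolutely convergent power series, entirely parallel to the proof of the inversion formula (which is in fact recovered as the special case $b_1=b_2=0$, since $C_{n_1,n_2}^{(0,0)}(x)=x^{(n_1+n_2,1)}$). The only point requiring a little care is the bookkeeping in the double Cauchy product, ensuring that the factorials recombine correctly into the two binomial coefficients $\binom{n_1}{k}$ and $\binom{n_2}{\ell}$.
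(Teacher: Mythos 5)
Your proposal is correct and coincides essentially with the paper's own proof: the authors likewise write $e^{-(b_1t_1+b_2t_2)}(1+t_1+t_2)^x = e^{(a_1-b_1)t_1+(a_2-b_2)t_2}\,e^{-(a_1t_1+a_2t_2)}(1+t_1+t_2)^x$, expand the correcting exponential as a double series, form the Cauchy product, and compare coefficients of $\frac{t_1^{n_1}}{n_1!}\frac{t_2^{n_2}}{n_2!}$. Your closing remark that the inversion formula is the special case $b_1=b_2=0$ (since $C_{n_1,n_2}^{(0,0)}(x)=x^{(n_1+n_2,1)}$) is a correct observation, though not one the paper makes.
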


\begin{proof}
Using again the generating function \eqref{genfunction1}, we have
\begin{eqnarray*}
&&\sum_{n_1=0}^{\infty}\sum_{n_2=0}^{\infty}C_{n_1,n_2}^{(b_1,b_2)}(x)\dfrac{t_1^{n_1}}{n_1!}\dfrac{t_2^{n_2}}{n_2!}=e^{-(b_1t_1+b_2t_2)}\big(1+t_1+t_2\big)^x\\
&&\hspace*{2cm} = e^{(a_1-b_1)t_1+(a_2-b_2)t_2)}e^{-(a_1t_1+a_2t_2)}\big(1+t_1+t_2\big)^x\\
&&\hspace*{2cm}= \left(\sum_{k=0}^{\infty}\sum_{\ell=0}^{\infty}\dfrac{((a_1-b_1)t_1)^k}{k!}\dfrac{((a_2-b_2)t_2)^\ell}{\ell!}    \right) \left( \sum_{n_1=0}^{\infty}\sum_{n_2=0}^{\infty}C_{n_1,n_2}^{(a_1,a_2)}(x)\dfrac{t_1^{n_1}}{n_1!}\dfrac{t_2^{n_2}}{n_2!}  \right)\\
&&\hspace*{2cm} =\sum_{n_1=0}^{\infty}\sum_{n_2=0}^{\infty}\left\{\sum_{k=0}^{n_1}\sum_{\ell=0}^{n_2}\binom{n_1}{k}\binom{n_2}{\ell}(a_1-b_1)^k(a_2-b_2)^\ell C^{(a_1,a_2)}_{n_1-k,n_2-\ell}(x)\right\}\dfrac{t_1^{n_1}}{n_1!}\dfrac{t_2^{n_2}}{n_2!}.
\end{eqnarray*}
Collecting the coefficients of $\dfrac{t_1^{n_1}}{n_1!}\dfrac{t_2^{n_2}}{n_2!}$, the proof follows.
\end{proof}

\textcolor{black}{
\begin{theorem}
The following addition formula is valid
\begin{equation}
C_{n_1,n_2}^{(a,b)}(x+y)= \sum_{k=0}^{n_1}\sum_{\ell=0}^{n_2}\binom{n_1}{k}\binom{n_2}{\ell} C^{(a_1,b_1)}_{k,\ell}(x)C^{(a_2,b_2)}_{n_1-k,n_2-\ell}(y), 
\end{equation}
where $a_1+a_2=a$ and $b_1+b_2=b$.
\end{theorem}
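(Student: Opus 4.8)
The plan is to run the same generating-function machinery used for the inversion and connection formulas, the new feature being that both the argument $x+y$ and the parameters $a,b$ split additively. First I would write down the generating function \eqref{genfunction1} for the left-hand side, with $x$ replaced by $x+y$ and parameters $(a,b)$:
\[
\sum_{n_1=0}^{\infty}\sum_{n_2=0}^{\infty}C_{n_1,n_2}^{(a,b)}(x+y)\dfrac{t_1^{n_1}}{n_1!}\dfrac{t_2^{n_2}}{n_2!}=e^{-(at_1+bt_2)}\big(1+t_1+t_2\big)^{x+y}.
\]
The key observation is that, under the hypotheses $a=a_1+a_2$ and $b=b_1+b_2$, the exponential factors as $e^{-(a_1t_1+b_1t_2)}e^{-(a_2t_1+b_2t_2)}$ while the power factors as $(1+t_1+t_2)^x(1+t_1+t_2)^y$. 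Regrouping the four factors into two copies of \eqref{genfunction1}, one carrying the data $(a_1,b_1,x)$ and the other $(a_2,b_2,y)$, I obtain
\[
e^{-(at_1+bt_2)}\big(1+t_1+t_2\big)^{x+y}=\left(\sum_{k,\ell}C_{k,\ell}^{(a_1,b_1)}(x)\dfrac{t_1^{k}}{k!}\dfrac{t_2^{\ell}}{\ell!}\right)\left(\sum_{p,q}C_{p,q}^{(a_2,b_2)}(y)\dfrac{t_1^{p}}{p!}\dfrac{t_2^{q}}{q!}\right).
\]

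Next I would expand the product on the right by performing the Cauchy product in each of the two variables $t_1$ and $t_2$ separately. Using Lemma \ref{lemma-sum} in each variable, the free double sums $\sum_{k}\sum_{p}$ and $\sum_{\ell}\sum_{q}$ are rewritten as triangular sums indexed by $n_1=k+p$ and $n_2=\ell+q$; the factorial denominators $\tfrac{1}{k!\,p!}$ and $\tfrac{1}{\ell!\,q!}$ recombine, after multiplying and dividing by $n_1!$ and $n_2!$, into the binomial weights $\binom{n_1}{k}$ and $\binom{n_2}{\ell}$. Collecting the coefficient of $\frac{t_1^{n_1}}{n_1!}\frac{t_2^{n_2}}{n_2!}$ therefore produces exactly
\[
\sum_{k=0}^{n_1}\sum_{\ell=0}^{n_2}\binom{n_1}{k}\binom{n_2}{\ell}C_{k,\ell}^{(a_1,b_1)}(x)\,C_{n_1-k,n_2-\ell}^{(a_2,b_2)}(y).
\]

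Finally, comparing this with the coefficient of $\frac{t_1^{n_1}}{n_1!}\frac{t_2^{n_2}}{n_2!}$ in the left-hand generating function yields the stated identity. The only point that requires care is the bookkeeping of the double Cauchy product in two independent indices, but this is precisely the manipulation already carried out in the proofs above, so I do not anticipate a genuine obstacle; the whole argument reduces to the additive splitting of the exponent and of the weights.
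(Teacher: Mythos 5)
Your proposal is correct and takes essentially the same route as the paper's own proof: both factor $e^{-(at_1+bt_2)}\big(1+t_1+t_2\big)^{x+y}$ into the two generating functions carrying the data $(a_1,b_1,x)$ and $(a_2,b_2,y)$, take the double Cauchy product, and compare coefficients of $\dfrac{t_1^{n_1}}{n_1!}\dfrac{t_2^{n_2}}{n_2!}$. There is no gap; the index bookkeeping you describe is exactly the manipulation the paper carries out.
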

\begin{proof}
Assuming that $a_1+a_2=a$ and $b_1+b_2=b$ and using the generating function \eqref{genfunction1}, we have
\begin{eqnarray*}
&&\sum_{n_1=0}^{\infty}\sum_{n_2=0}^{\infty}C_{n_1,n_2}^{(a,b)}(x+y)\dfrac{t_1^{n_1}}{n_1!}\dfrac{t_2^{n_2}}{n_2!}=e^{-(at_1+bt_2)}\big(1+t_1+t_2\big)^{x+y}\\
&&\hspace*{2cm} = e^{-(a_1t_1+b_1t_2)}\big(1+t_1+t_2\big)^{x}e^{-(a_2t_1+b_2t_2)}\big(1+t_1+t_2\big)^{y}\\
&&\hspace*{2cm}= \left(\sum_{k=0}^{\infty}\sum_{\ell=0}^{\infty}  C_{n_1,n_2}^{(a_1,b_1)}(x)\dfrac{t_1^{n_1}}{n_1!}\dfrac{t_2^{n_2}}{n_2!}    \right) \left( \sum_{n_1=0}^{\infty}\sum_{n_2=0}^{\infty}C_{n_1,n_2}^{(a_2,b_2)}(y)\dfrac{t_1^{n_1}}{n_1!}\dfrac{t_2^{n_2}}{n_2!}  \right)\\
&&\hspace*{2cm} =\sum_{n_1=0}^{\infty}\sum_{n_2=0}^{\infty}\left\{\sum_{k=0}^{n_1}\sum_{\ell=0}^{n_2}\binom{n_1}{k}\binom{n_2}{\ell} C^{(a_1,b_1)}_{k,\ell}(x)C^{(a_2,b_2)}_{n_1-k,n_2-\ell}(y)\right\}\dfrac{t_1^{n_1}}{n_1!}\dfrac{t_2^{n_2}}{n_2!}.
\end{eqnarray*}
Collecting the coefficients of $\dfrac{t_1^{n_1}}{n_1!}\dfrac{t_2^{n_2}}{n_2!}$, the proof follows.
\end{proof}}

\noindent Next, we prove some recurrence relations for the multiple Charlier polynomials $C_{n_1,n_2}^{(a_1,a_2)}(x)$.

\begin{proposition}
Let $G(x,t_1,t_2)=e^{-(a_1t_1+a_2t_2)}\big(1+t_1+t_2\big)^x$ be the generating function of the multiple Charlier polynomials $C_{n_1,n_2}^{(a_1,a_2)}(x)$. Then, the following equations are valid.
\begin{eqnarray}
   \dfrac{\partial G}{\partial t_1}(x,t_1,t_2)-\dfrac{\partial G}{\partial t_2}(x,t_1,t_2)&=& (a_2-a_1)G(x,t_1,t_2),\label{gen-rel1}\\
   \big(1+t_1+t_2\big)\dfrac{\partial G}{\partial t_1}(x,t_1,t_2)&=&\left(x-a_1\big(1+t_1+t_2\big)\right)G(x,t_1,t_2).\label{gen-rel2}
\end{eqnarray}
\end{proposition}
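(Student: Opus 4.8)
The plan is to verify both identities by direct differentiation of the closed form
\[
G(x,t_1,t_2)=e^{-(a_1t_1+a_2t_2)}\big(1+t_1+t_2\big)^x,
\]
since this is an explicit elementary function of $t_1$ and $t_2$. The whole proposition is a routine application of the product rule and the chain rule; there is no genuine obstacle, only bookkeeping. I would compute $\partial G/\partial t_1$ and $\partial G/\partial t_2$ once and for all, then assemble the two stated combinations.

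For the first step I would differentiate with respect to $t_1$. The factor $e^{-(a_1t_1+a_2t_2)}$ contributes $-a_1$, and the factor $(1+t_1+t_2)^x$ contributes $x(1+t_1+t_2)^{x-1}$, so that
\[
\frac{\partial G}{\partial t_1}(x,t_1,t_2)=\Big(-a_1+\frac{x}{1+t_1+t_2}\Big)G(x,t_1,t_2).
\]
By the same computation, since the exponential factor contributes $-a_2$ while the power factor is symmetric in $t_1$ and $t_2$,
\[
\frac{\partial G}{\partial t_2}(x,t_1,t_2)=\Big(-a_2+\frac{x}{1+t_1+t_2}\Big)G(x,t_1,t_2).
\]

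To obtain \eqref{gen-rel1} I would subtract these two expressions: the common term $x/(1+t_1+t_2)$ cancels, leaving $\partial G/\partial t_1-\partial G/\partial t_2=(-a_1+a_2)G=(a_2-a_1)G$, which is exactly the asserted relation. To obtain \eqref{gen-rel2} I would multiply the formula for $\partial G/\partial t_1$ by the factor $(1+t_1+t_2)$; the term $x/(1+t_1+t_2)$ then becomes simply $x$, giving
\[
\big(1+t_1+t_2\big)\frac{\partial G}{\partial t_1}=\big(x-a_1(1+t_1+t_2)\big)G,
\]
as claimed. The only point requiring the slightest care is the handling of the factor $(1+t_1+t_2)^{x-1}$ when recombining it with $G$, which contains $(1+t_1+t_2)^{x}$; writing the derivative in the factored form above, with $x/(1+t_1+t_2)$ multiplying $G$, makes both cancellations transparent and is the cleanest route to both identities.
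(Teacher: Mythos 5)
Your proposal is correct and coincides with the paper's argument: the paper disposes of this proposition with the single line ``the proof follows from straightforward computations,'' and your computation of $\frac{\partial G}{\partial t_1}=\bigl(-a_1+\frac{x}{1+t_1+t_2}\bigr)G$ and $\frac{\partial G}{\partial t_2}=\bigl(-a_2+\frac{x}{1+t_1+t_2}\bigr)G$, followed by subtraction for \eqref{gen-rel1} and multiplication by $(1+t_1+t_2)$ for \eqref{gen-rel2}, is precisely the intended routine verification. Nothing is missing.
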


\begin{proof}
The proof follows from straighforward computations.
\end{proof}

\begin{theorem}\label{theo1}
The multiple Charlier polynomials $C_{n_1,n_2}^{(a_1,a_2)}(x)$
fulfill the following recurrence relations
\begin{eqnarray}
 (a_2-a_1)C_{n_1,n_2}^{(a_1,a_2)}(x)&=& C_{n_1+1,n_2}^{(a_1,a_2)}(x)-C_{n_1,n_2+1}^{(a_1,a_2)}(x),\label{rec-1}\\
  xC_{n_1,n_2}^{(a_1,a_2)}(x)&=& C_{n_1+1,n_2}^{(a_1,a_2)}(x)+(a_1+n_1+n_2)C_{n_1,n_2}^{(a_1,a_2)}(x)\nonumber\\
  &&+(a_1n_1+a_2n_2)C_{n_1,n_2-1}^{(a_1,a_2)}(x)+a_1n_1(a_1-a_2)C_{n_1-1,n_2-1}^{(a_1,a_2)}(x);\label{rec-2}\\
 \textcolor{black}{ xC_{n_1,n_2}^{(a_1,a_2)}(x)}&\textcolor{black}{=}&\textcolor{black}{ C_{n_1+1,n_2}^{(a_1,a_2)}(x)+(a_1+n_1+n_2)C_{n_1,n_2}^{(a_1,a_2)}(x)}\nonumber\\
  &&\hspace*{3cm}\textcolor{black}{ +a_1n_1C_{n_1-1,n_2}^{(a_1,a_2)}(x)+a_2n_2C_{n_1,n_2-1}^{(a_1,a_2)}(x).\label{rec-3}}
\end{eqnarray}
\end{theorem}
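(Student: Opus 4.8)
The plan is to expand each of the two generating-function identities of the preceding Proposition into power series in $t_1,t_2$ and match the coefficient of $\frac{t_1^{n_1}}{n_1!}\frac{t_2^{n_2}}{n_2!}$ on the two sides. Throughout I would use that differentiation deletes the lowest term and shifts an index,
\[
\frac{\partial G}{\partial t_1}=\sum_{n_1,n_2}C_{n_1+1,n_2}^{(a_1,a_2)}(x)\frac{t_1^{n_1}}{n_1!}\frac{t_2^{n_2}}{n_2!},\qquad \frac{\partial G}{\partial t_2}=\sum_{n_1,n_2}C_{n_1,n_2+1}^{(a_1,a_2)}(x)\frac{t_1^{n_1}}{n_1!}\frac{t_2^{n_2}}{n_2!},
\]
while multiplication by $t_1$ (resp.\ $t_2$) produces the factor $n_1$ (resp.\ $n_2$) after the shift; for instance the coefficient of $\frac{t_1^{n_1}}{n_1!}\frac{t_2^{n_2}}{n_2!}$ in $t_1G$ is $n_1C_{n_1-1,n_2}^{(a_1,a_2)}(x)$. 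Terms with a negative index are read as $0$, and since each is weighted by the matching $n_i$, the boundary cases $n_1=0$ or $n_2=0$ cost nothing.

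First I would treat \eqref{gen-rel1}: inserting the two first-derivative series and comparing coefficients gives at once $C_{n_1+1,n_2}^{(a_1,a_2)}(x)-C_{n_1,n_2+1}^{(a_1,a_2)}(x)=(a_2-a_1)C_{n_1,n_2}^{(a_1,a_2)}(x)$, which is \eqref{rec-1}. Next I would expand \eqref{gen-rel2}, writing the left-hand side as $\partial_{t_1}G+t_1\partial_{t_1}G+t_2\partial_{t_1}G$ and the right-hand side as $(x-a_1)G-a_1t_1G-a_1t_2G$. The coefficient comparison then yields the intermediate identity
\[
xC_{n_1,n_2}^{(a_1,a_2)}(x)=C_{n_1+1,n_2}^{(a_1,a_2)}(x)+(a_1+n_1)C_{n_1,n_2}^{(a_1,a_2)}(x)+n_2C_{n_1+1,n_2-1}^{(a_1,a_2)}(x)+a_1n_1C_{n_1-1,n_2}^{(a_1,a_2)}(x)+a_1n_2C_{n_1,n_2-1}^{(a_1,a_2)}(x).
\]

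The one term not yet in the desired shape is $n_2C_{n_1+1,n_2-1}^{(a_1,a_2)}(x)$, which still carries the forward index $n_1+1$, and this is where the actual idea enters: I would eliminate it by replacing $n_2$ with $n_2-1$ in \eqref{rec-1}, giving $C_{n_1+1,n_2-1}^{(a_1,a_2)}(x)=C_{n_1,n_2}^{(a_1,a_2)}(x)+(a_2-a_1)C_{n_1,n_2-1}^{(a_1,a_2)}(x)$. Substituting this and collecting the coefficient of $C_{n_1,n_2-1}^{(a_1,a_2)}(x)$, namely $(a_2-a_1)n_2+a_1n_2=a_2n_2$, produces exactly \eqref{rec-3}. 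Finally \eqref{rec-2} would follow from \eqref{rec-3} by one more use of \eqref{rec-1}: writing \eqref{rec-1} with $n_1,n_2$ replaced by $n_1-1,n_2-1$ gives $C_{n_1-1,n_2}^{(a_1,a_2)}(x)=C_{n_1,n_2-1}^{(a_1,a_2)}(x)+(a_1-a_2)C_{n_1-1,n_2-1}^{(a_1,a_2)}(x)$, and inserting this into the term $a_1n_1C_{n_1-1,n_2}^{(a_1,a_2)}(x)$ of \eqref{rec-3}, then merging the two $C_{n_1,n_2-1}^{(a_1,a_2)}(x)$ contributions into the factor $a_1n_1+a_2n_2$, turns \eqref{rec-3} into \eqref{rec-2}.

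I expect the only genuine bookkeeping to be the index shifts incurred when multiplying the series by $t_1$ and $t_2$; the true content is the observation that the raw coefficient extraction from \eqref{gen-rel2} does not land directly on \eqref{rec-2} or \eqref{rec-3} but on the intermediate form above, so that \eqref{rec-1} must be fed back in to remove the mixed term and to pass between the two $x$-recurrences. Once that is recognized, the remaining passage among the three relations is purely algebraic.
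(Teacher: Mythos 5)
Your proposal is correct and takes essentially the same route as the paper: both extract coefficients of $\frac{t_1^{n_1}}{n_1!}\frac{t_2^{n_2}}{n_2!}$ from \eqref{gen-rel1} and \eqref{gen-rel2}, arrive at the same intermediate identity (the paper's \eqref{for-22}), and then feed shifted instances of \eqref{rec-1} back in, the only difference being that you derive \eqref{rec-3} first and obtain \eqref{rec-2} from it, while the paper proceeds in the opposite order. Incidentally, your substitution $C_{n_1+1,n_2-1}^{(a_1,a_2)}(x)=C_{n_1,n_2}^{(a_1,a_2)}(x)+(a_2-a_1)C_{n_1,n_2-1}^{(a_1,a_2)}(x)$ carries the correct sign, whereas the paper's displayed version of this step has a sign typo (it is written with $-(a_2-a_1)C_{n_1,n_2-1}^{(a_1,a_2)}(x)$), even though the final relations \eqref{rec-2} and \eqref{rec-3} are stated correctly.
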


\begin{proof}
From \eqref{gen-rel1}, we have
\begin{eqnarray*}
(a_2-a_1)G(x,t_1,t_2)&=&\dfrac{\partial G}{\partial t_1}(x,t_1,t_2)-\dfrac{\partial G}{\partial t_2}(x,t_1,t_2).
\end{eqnarray*}
Hence,
\begin{eqnarray*}
\sum_{n_1=0}^{\infty}\sum_{n_2=0}^{\infty}(a_2-a_1)C_{n_1,n_2}^{(a_1,a_2)}(x)\dfrac{t_1^{n_1}}{n_1!}\dfrac{t_2^{n_2}}{n_2!}&=&\sum_{n_2=0}^{\infty}\left(C_{n_1+1,n_2}^{(a_1,a_2)}(x)-C_{n_1,n_2+1}^{(a_1,a_2)}(x)\right)\dfrac{t_1^{n_1}}{n_1!}\dfrac{t_2^{n_2}}{n_2!}.
\end{eqnarray*}
Collecting the coefficients of $\dfrac{t_1^{n_1}}{n_1!}\dfrac{t_2^{n_2}}{n_2!}$, \eqref{rec-1} follows. \\
In order to prove \eqref{rec-2}, we use \eqref{gen-rel2} as follows. The left-hand side of \eqref{gen-rel2} gives
\begin{eqnarray*}
 \big(1+t_1+t_2\big)\dfrac{\partial G}{\partial t_1}&=& \big(1+t_1+t_2\big) \sum_{n_1=1}^{\infty}\sum_{n_2=0}^{\infty}C_{n_1,n_2}^{(a_1,a_2)}(x)\dfrac{t_1^{n_1-1}}{(n_1-1)!}\dfrac{t_2^{n_2}}{n_2!}\\
 &=& \sum_{n_1=0}^{\infty}\sum_{n_2=0}^{\infty}\Big(C_{n_1+1,n_2}^{(a_1,a_2)}(x)+n_1C_{n_1,n_2}^{(a_1,a_2)}(x)+n_2C_{n_1+1,n_2-1}^{(a_1,a_2)}(x)\Big)\dfrac{t_1^{n_1}}{n_1!}\dfrac{t_2^{n_2}}{n_2!}.
\end{eqnarray*}
Next, the right-hand side of \eqref{gen-rel2} gives
\begin{eqnarray*}
&&\Big(x-a_1\big(1+t_1+t_2\big)\Big)G= \Big(x-a_1\big(1+t_1+t_2\big)\Big)\sum_{n_1=0}^{\infty}\sum_{n_2=0}^{\infty}C_{n_1,n_2}^{(a_1,a_2)}(x)\dfrac{t_1^{n_1}}{n_1!}\dfrac{t_2^{n_2}}{n_2!}\noindent \\
&&\hspace*{2.cm}=  \sum_{n_1=0}^{\infty}\sum_{n_2=0}^{\infty}\left[(x-a_1)C_{n_1,n_2}^{(a_1,a_2)}(x)-a_1n_1 C_{n_1-1,n_2}^{(a_1,a_2)}(x)-a_1n_2C_{n_1,n_2-1}^{(a_1,a_2)}(x) \right]\dfrac{t_1^{n_1}}{n_1!}\dfrac{t_2^{n_2}}{n_2!}.
\end{eqnarray*}
Collecting the coefficients of $\dfrac{t_1^{n_1}}{n_1!}\dfrac{t_2^{n_2}}{n_2!}$, we get
\begin{eqnarray}
&&C_{n_1+1,n_2}^{(a_1,a_2)}(x)+n_1C_{n_1,n_2}^{(a_1,a_2)}(x)+n_2C_{n_1+1,n_2-1}^{(a_1,a_2)}(x)\nonumber\\
&&\hspace*{2cm}=(x-a_1)C_{n_1,n_2}^{(a_1,a_2)}(x)-a_1n_1 C_{n_1-1,n_2}^{(a_1,a_2)}(x)-a_1n_2C_{n_1,n_2-1}^{(a_1,a_2)}(x).\label{for-22}
\end{eqnarray}
From \eqref{rec-1}, replacing $n_2$ by $n_2-1$  and $n_1$ by
$n_1-1$, we get:
\begin{eqnarray*}
C_{n_1+1,n_2-1}^{(a_1,a_2)}(x)&=& C_{n_1,n_2}^{(a_1,a_2)}(x)-(a_2-a_1)C_{n_1,n_2-1}^{(a_1,a_2)}(x)\\
C_{n_1-1,n_2}^{(a_1,a_2)}(x)&=& C_{n_1,n_2-1}^{(a_1,a_2)}(x)-(a_2-a_1)C_{n_1-1,n_2-1}^{(a_1,a_2)}(x).
\end{eqnarray*}
Subtituting these relations in \eqref{for-22}, \eqref{rec-2} follows.\\
\textcolor{black}{For the relation \eqref{rec-3}, we use again \eqref{rec-1} this time with \eqref{rec-2}. Replacing $n_1$ by $n_1-1$ and $n_2$ by $n_2-1$ in \eqref{rec-1}, we obtain
\[(a_2-a_1)C_{n_1-1,n_2-1}^{(a_1,a_2)}(x)= C_{n_1,n_2-1}^{(a_1,a_2)}(x)-C_{n_1-1,n_2}^{(a_1,a_2)}(x).\]
Then, \eqref{rec-2} becomes
\begin{eqnarray*}
xC_{n_1,n_2}^{(a_1,a_2)}(x)&=& C_{n_1+1,n_2}^{(a_1,a_2)}(x)+(a_1+n_1+n_2)C_{n_1,n_2}^{(a_1,a_2)}(x)\\
  &&+(a_1n_1+a_2n_2)C_{n_1,n_2-1}^{(a_1,a_2)}(x)-a_1n_1\left( C_{n_1,n_2-1}^{(a_1,a_2)}(x)-C_{n_1-1,n_2}^{(a_1,a_2)}(x) \right)\\
  &=& C_{n_1+1,n_2}^{(a_1,a_2)}(x)+(a_1+n_1+n_2)C_{n_1,n_2}^{(a_1,a_2)}(x)\\
  &&\hspace*{3cm}+a_1n_1 C_{n_1-1,n_2}^{(a_1,a_2)}(x)+a_2n_2C_{n_1,n_2-1}^{(a_1,a_2)}(x)
\end{eqnarray*}
This ends the proof of the theorem.}
\end{proof}

\textcolor{black}{\begin{remark}
Note that the recurrence equations \eqref{rec-1}, \eqref{rec-2}, \eqref{rec-3} appear in \cite{haneczok,van} but are obtained using different approach.  Note also that when $n_2=0$, $C_{n_1,0}^{(a_1,a_2)}(x)=\tilde{C}_{n_1}^{(a_1)}(x)$ is the classical  monic Charlier polynomials, and \eqref{rec-3} is exactely the normalized recurrence relation (9.14.4) for monic Charlier polynomials given in \cite{KLS}.
\end{remark}
\noindent Next, we state without proof some general results about multiple Charlier polynomials. Note that the proofs are similar to the case $r=2$.}

\begin{theorem}\label{generalized-generating-function}
The multiple Charlier polynomials $C_{\vec{n}}^{(\vec{a})}(x)$ have the following generating function:
\begin{equation}
e^{-\sum\limits_{j=1}^{r}a_jt_j}\Big(1+\sum_{j=1}^{r}t_j\Big)^x=\sum_{n_1=0}^{\infty}\sum_{n_2=0}^{\infty}\cdots\sum_{n_r=0}^{\infty}C_{\vec{n}}^{(\vec{a})}(x)\dfrac{t_1^{n_1}}{n_1!}\dfrac{t_2^{n_2}}{n_2!}\cdots
\dfrac{t_r^{n_r}}{n_r!}.
\end{equation}
\end{theorem}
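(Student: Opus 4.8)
The plan is to imitate the proof of the $r=2$ case (the theorem yielding \eqref{genfunction1}) essentially verbatim, replacing the two-fold sums by $r$-fold sums and invoking Lemma \ref{lemma-sum} once for each variable. The starting point is the explicit representation of the multiple Charlier polynomials, the $r$-variable analogue of \eqref{multi-charlier-2},
\begin{equation*}
C_{\vec{n}}^{(\vec{a})}(x)=\sum_{k_1=0}^{n_1}\cdots\sum_{k_r=0}^{n_r}\left(\prod_{j=1}^{r}\binom{n_j}{k_j}(-a_j)^{n_j-k_j}\right)x^{(k_1+\cdots+k_r,1)},
\end{equation*}
as recorded in \cite{arvesu}. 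Substituting this into the right-hand side of the claimed identity produces a $2r$-fold series in the indices $n_1,\ldots,n_r$ and $k_1,\ldots,k_r$, which I would then reorganize variable by variable.

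Next I would process the variables one at a time. Fixing all indices except the pair $(n_1,k_1)$ and applying the second identity of Lemma \ref{lemma-sum} (the reindexing $\sum_{n}\sum_{k=0}^{n}\mapsto\sum_{n}\sum_{k}$ with $n\mapsto n+k$), the inner $n_1$-sum decouples into $\sum_{n_1}(-a_1t_1)^{n_1}/n_1!=e^{-a_1t_1}$ together with a surviving factor $t_1^{k_1}/k_1!$, exactly as in the two-variable computation where $e^{-a_2t_2}$ was pulled out. Repeating this for $j=2,\ldots,r$ peels off one exponential $e^{-a_jt_j}$ at each step and leaves
\begin{equation*}
e^{-\sum_{j=1}^{r}a_jt_j}\sum_{k_1=0}^{\infty}\cdots\sum_{k_r=0}^{\infty}x^{(k_1+\cdots+k_r,1)}\prod_{j=1}^{r}\frac{t_j^{k_j}}{k_j!}.
\end{equation*}
Equivalently, one may organize this as an induction on $r$ using the $r=2$ argument as the engine of the inductive step, with the classical Charlier generating function as the base case $r=1$.

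It then remains to identify the surviving multiple sum with $\big(1+\sum_{j=1}^{r}t_j\big)^x$. Grouping the terms by total degree $m=k_1+\cdots+k_r$ and applying the multinomial theorem, $\sum_{k_1+\cdots+k_r=m}\frac{m!}{\prod_j k_j!}\prod_j t_j^{k_j}=(t_1+\cdots+t_r)^m$, the sum collapses to $\sum_{m=0}^{\infty}x^{(m,1)}(t_1+\cdots+t_r)^m/m!$, which is the Newton binomial series $\big(1+(t_1+\cdots+t_r)\big)^x$ for the falling factorial. This is exactly the claimed generating function.

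The computation involves no genuine difficulty beyond careful index bookkeeping; the main obstacle is ensuring that each application of Lemma \ref{lemma-sum} is carried out with the correct index held fixed, so that the exponential factors are extracted one at a time and cleanly, and that the iterated rearrangement of the $r$-fold sums is legitimate—which in the formal power series setting is automatic, since each monomial $t_1^{n_1}\cdots t_r^{n_r}$ receives only finitely many contributions. Once the rearrangement is staged as in the $r=2$ case, the multinomial collapse at the end is immediate.
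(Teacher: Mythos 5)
Your proposal is correct and is precisely the argument the paper intends: the paper states Theorem \ref{generalized-generating-function} without proof, remarking only that the proofs are similar to the case $r=2$, and your variable-by-variable application of Lemma \ref{lemma-sum} to peel off one factor $e^{-a_jt_j}$ at a time is the faithful generalization of the paper's proof of \eqref{genfunction1}. The final multinomial collapse to $\sum_{m\geq 0}x^{(m,1)}(t_1+\cdots+t_r)^m/m!=\big(1+\sum_{j=1}^{r}t_j\big)^x$ is exactly the step used implicitly at the end of the $r=2$ computation, so nothing is missing.
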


\noindent \textcolor{black}{ From Theorem \ref{generalized-generating-function} we deduce the following propositions.
\begin{proposition}
The multiple Charlier polynomials $C_{\vec{n}}^{(\vec{a})}(x)$ fulfill the following inversion formula
\begin{equation*}
x^{(|\vec{n}|)}=\sum_{k_r=0}^{n_r}\cdots\sum_{k_1=0}^{n_1}\binom{n_1}{k_1}\cdots\binom{n_r}{k_r}(a_1)^{n_1-k_1}\cdots(a_r)^{n_r-k_r}C_{\vec{n}}^{(|\vec{k}|)}(x).
\end{equation*}
\end{proposition}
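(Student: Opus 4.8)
The plan is to mirror the proof of the inversion formula already established for $r=2$, now using the $r$-fold generating function of Theorem~\ref{generalized-generating-function}. First I would record the expansion of the left-hand building block. From the binomial series $(1+u)^x=\sum_{m\ge 0}x^{(m,1)}u^m/m!$ with $u=t_1+\cdots+t_r$, together with the multinomial theorem applied to $(t_1+\cdots+t_r)^m$, one obtains
\[
\big(1+t_1+\cdots+t_r\big)^x=\sum_{n_1=0}^{\infty}\cdots\sum_{n_r=0}^{\infty}x^{(|\vec{n}|,1)}\,\frac{t_1^{n_1}}{n_1!}\cdots\frac{t_r^{n_r}}{n_r!},
\]
so that the coefficient of $\prod_j t_j^{n_j}/n_j!$ on that side is exactly the single falling factorial $x^{(|\vec{n}|,1)}$.

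Next I would write $\big(1+\sum_j t_j\big)^x = e^{\sum_j a_j t_j}\,e^{-\sum_j a_j t_j}\big(1+\sum_j t_j\big)^x$ and substitute the generating function of Theorem~\ref{generalized-generating-function} for the last two factors, while expanding $e^{\sum_j a_j t_j}=\prod_{j=1}^r\sum_{k_j\ge 0}(a_j t_j)^{k_j}/k_j!$. Multiplying the two $r$-fold power series and carrying out the Cauchy product in each variable $t_j$ separately (the $r$-variable iterate of the second identity in Lemma~\ref{lemma-sum}) gives
\[
\big(1+t_1+\cdots+t_r\big)^x=\sum_{n_1=0}^{\infty}\cdots\sum_{n_r=0}^{\infty}\left\{\sum_{k_1=0}^{n_1}\cdots\sum_{k_r=0}^{n_r}\binom{n_1}{k_1}\cdots\binom{n_r}{k_r}a_1^{k_1}\cdots a_r^{k_r}\,C_{n_1-k_1,\ldots,n_r-k_r}^{(\vec{a})}(x)\right\}\frac{t_1^{n_1}}{n_1!}\cdots\frac{t_r^{n_r}}{n_r!}.
\]

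Comparing the coefficients of $\prod_j t_j^{n_j}/n_j!$ in the two displays yields
\[
x^{(|\vec{n}|,1)}=\sum_{k_1=0}^{n_1}\cdots\sum_{k_r=0}^{n_r}\binom{n_1}{k_1}\cdots\binom{n_r}{k_r}a_1^{k_1}\cdots a_r^{k_r}\,C_{n_1-k_1,\ldots,n_r-k_r}^{(\vec{a})}(x),
\]
and the substitution $k_j\mapsto n_j-k_j$ in each index (using $\binom{n_j}{k_j}=\binom{n_j}{n_j-k_j}$) turns this into the asserted inversion formula, with the summand $C_{k_1,\ldots,k_r}^{(\vec{a})}(x)$ weighted by $a_1^{n_1-k_1}\cdots a_r^{n_r-k_r}$.

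I do not expect any genuine obstacle here, consistent with the remark that the arguments parallel the case $r=2$. The only two points that deserve care are the justification that the left-hand coefficient collapses to the single falling factorial $x^{(|\vec{n}|,1)}$ (the binomial-plus-multinomial step above), and the bookkeeping of the $r$-fold index convolution when passing from the product of series to a single $r$-indexed sum; both are direct generalizations of the manipulations already performed for $r=2$, with Lemma~\ref{lemma-sum} supplying the requisite reindexing in each variable.
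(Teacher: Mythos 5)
Your proof is correct and is essentially the paper's own approach: the paper states this proposition without proof, remarking only that the argument parallels the $r=2$ case, and your argument is precisely the $r$-fold generalization of the paper's $r=2$ inversion proof (insert $e^{\sum_j a_jt_j}e^{-\sum_j a_jt_j}$ in front of $\bigl(1+\sum_j t_j\bigr)^x$, use the generating function of Theorem~\ref{generalized-generating-function}, perform the Cauchy product in each variable, compare coefficients, and reindex $k_j\mapsto n_j-k_j$). Note only that your derivation lands on $C_{k_1,\ldots,k_r}^{(\vec{a})}(x)$, which silently corrects the typographically garbled index $C_{\vec{n}}^{(|\vec{k}|)}(x)$ in the paper's statement.
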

\begin{proposition}
The multiple Charlier polynomials $C_{\vec{n}}^{(\vec{a})}(x)$ and $C_{\vec{n}}^{(\vec{b})}(x)$ fulfill the following connection formula
\begin{equation*}
C_{\vec{n}}^{(\vec{b})}(x)=\sum_{k_r=0}^{n_r}\cdots\sum_{k_1=0}^{n_1}\binom{n_1}{k_1}\cdots\binom{n_r}{k_r}(a_1-b_1)^{k_1}\cdots(a_r-b_r)^{k_r} C^{(\vec{a})}_{\vec{n}-\vec{k}}(x).
\end{equation*}
\end{proposition}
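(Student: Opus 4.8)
The plan is to imitate the proof of the $r=2$ connection formula established above, replacing the bivariate generating function \eqref{genfunction1} with the $r$-variable generating function of Theorem \ref{generalized-generating-function}. The key algebraic observation, exactly as in the $r=2$ case, is the factorization
\[
e^{-\sum_{j=1}^{r}b_jt_j}=e^{\sum_{j=1}^{r}(a_j-b_j)t_j}\,e^{-\sum_{j=1}^{r}a_jt_j},
\]
which exhibits the generating function of $C_{\vec n}^{(\vec b)}(x)$ as a product of an elementary exponential factor with the generating function of $C_{\vec n}^{(\vec a)}(x)$.

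First I would start from Theorem \ref{generalized-generating-function} applied to the parameter vector $\vec b$, giving
\[
\sum_{n_1=0}^{\infty}\cdots\sum_{n_r=0}^{\infty}C_{\vec n}^{(\vec b)}(x)\dfrac{t_1^{n_1}}{n_1!}\cdots\dfrac{t_r^{n_r}}{n_r!}=e^{-\sum_{j=1}^{r}b_jt_j}\Big(1+\sum_{j=1}^{r}t_j\Big)^x.
\]
Substituting the factorization, the right-hand side becomes the product of $e^{\sum_{j=1}^{r}(a_j-b_j)t_j}$ with the generating function of $C_{\vec n}^{(\vec a)}(x)$. I would then expand the exponential factor as a product of one-variable series,
\[
e^{\sum_{j=1}^{r}(a_j-b_j)t_j}=\prod_{j=1}^{r}\sum_{k_j=0}^{\infty}\dfrac{(a_j-b_j)^{k_j}}{k_j!}t_j^{k_j},
\]
so that both factors are displayed as multiple power series in $t_1,\dots,t_r$.

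The remaining step is to multiply the two multiple series and collect the coefficient of $\dfrac{t_1^{n_1}}{n_1!}\cdots\dfrac{t_r^{n_r}}{n_r!}$. Performing the Cauchy product independently in each variable and applying the multivariate analogue of Lemma \ref{lemma-sum}, each one-dimensional convolution $\sum_{k_j}\frac{(a_j-b_j)^{k_j}}{k_j!}\cdot\frac{1}{(n_j-k_j)!}$ produces the factor $\binom{n_j}{k_j}(a_j-b_j)^{k_j}$, and the product over $j=1,\dots,r$ yields precisely the claimed identity. The only genuine work here is the multi-index bookkeeping in this convolution; since the $r$ variables separate completely, it is a direct $r$-fold iteration of the single-variable rearrangement already used in the $r=2$ case, so I do not anticipate any real obstacle beyond the notational overhead.
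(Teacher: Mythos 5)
Your proposal is correct and matches the paper's intended argument: the paper proves the $r=2$ connection formula exactly this way (via the factorization $e^{-(b_1t_1+b_2t_2)}=e^{(a_1-b_1)t_1+(a_2-b_2)t_2}e^{-(a_1t_1+a_2t_2)}$ of the generating function and coefficient comparison) and explicitly states the general-$r$ propositions without proof, noting the proofs are similar to the $r=2$ case. Your $r$-fold factorization, variable-by-variable Cauchy product, and coefficient extraction are precisely that routine extension, so there is nothing to add.
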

\begin{proposition}
The following addition formula is valid
\begin{equation*}
C_{\vec{n}}^{(\vec{a})}(x+y)= \sum_{k_1=0}^{n_1}\ldots \sum_{k_r=0}^{n_r}\binom{n_1}{k_1}\ldots\binom{n_r}{k_r} C^{\vec{\alpha}}_{\vec{k}}(x)C^{\vec{a}-\vec{\alpha}}_{\vec{n}-\vec{k}}(y).
\end{equation*}
\end{proposition}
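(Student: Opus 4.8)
The plan is to repeat the $r=2$ addition-formula argument essentially verbatim, the only change being that every Cauchy product is now $r$-fold instead of double. First I would fix a vector $\vec{\alpha}=(\alpha_1,\dots,\alpha_r)$ and put $\vec{a}-\vec{\alpha}=(a_1-\alpha_1,\dots,a_r-\alpha_r)$, so that the two parameter vectors sum coordinate-wise to $\vec{a}$. Substituting the exponent $x+y$ into the generating function of Theorem \ref{generalized-generating-function} and splitting both factors gives
\begin{eqnarray*}
e^{-\sum_{j=1}^{r}a_jt_j}\Big(1+\sum_{j=1}^{r}t_j\Big)^{x+y}
&=&\left[e^{-\sum_{j=1}^{r}\alpha_jt_j}\Big(1+\sum_{j=1}^{r}t_j\Big)^{x}\right]\\
&&\times\left[e^{-\sum_{j=1}^{r}(a_j-\alpha_j)t_j}\Big(1+\sum_{j=1}^{r}t_j\Big)^{y}\right],
\end{eqnarray*}
where, by Theorem \ref{generalized-generating-function} again, the first bracket is the generating function of $C^{\vec{\alpha}}_{\vec{k}}(x)$ and the second that of $C^{\vec{a}-\vec{\alpha}}_{\vec{m}}(y)$.

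Next I would expand each bracket as its multiple series and multiply them out, collecting the coefficient of $\dfrac{t_1^{n_1}}{n_1!}\cdots\dfrac{t_r^{n_r}}{n_r!}$. Concretely, the product of
\[\sum_{k_1,\dots,k_r\ge0}C^{\vec{\alpha}}_{\vec{k}}(x)\,\dfrac{t_1^{k_1}}{k_1!}\cdots\dfrac{t_r^{k_r}}{k_r!}\quad\text{and}\quad\sum_{m_1,\dots,m_r\ge0}C^{\vec{a}-\vec{\alpha}}_{\vec{m}}(y)\,\dfrac{t_1^{m_1}}{m_1!}\cdots\dfrac{t_r^{m_r}}{m_r!}\]
is re-indexed by $n_j=k_j+m_j$ in each coordinate; applying Lemma \ref{lemma-sum} once in each of the $r$ variables turns $\dfrac{1}{k_j!\,m_j!}$ into $\dfrac{1}{n_j!}\binom{n_j}{k_j}$, and comparing coefficients yields exactly
\[C_{\vec{n}}^{(\vec{a})}(x+y)= \sum_{k_1=0}^{n_1}\cdots\sum_{k_r=0}^{n_r}\binom{n_1}{k_1}\cdots\binom{n_r}{k_r}\,C^{\vec{\alpha}}_{\vec{k}}(x)\,C^{\vec{a}-\vec{\alpha}}_{\vec{n}-\vec{k}}(y).\]

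Because the formal manipulation is identical in every coordinate, there is no genuine analytic difficulty here; this is precisely why the statement is recorded without proof as a routine lift of the $r=2$ case. The only point requiring any care — and the closest thing to an obstacle — is the bookkeeping of the $r$-fold Cauchy product: one must apply Lemma \ref{lemma-sum} separately in each variable (or invoke a clean multi-index version of it) and check that the binomial factors assemble into the advertised product $\binom{n_1}{k_1}\cdots\binom{n_r}{k_r}$, with the second index vector correctly landing on $\vec{n}-\vec{k}$.
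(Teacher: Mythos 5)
Your argument is correct and is exactly the route the paper intends: the paper states this proposition without proof, remarking only that the proofs are similar to the case $r=2$, and your factorization of the generating function of Theorem~\ref{generalized-generating-function} into the $\vec{\alpha}$ and $\vec{a}-\vec{\alpha}$ factors, followed by the $r$-fold Cauchy product and comparison of coefficients of $\frac{t_1^{n_1}}{n_1!}\cdots\frac{t_r^{n_r}}{n_r!}$, is precisely the paper's $r=2$ addition-formula proof lifted coordinate-wise. Nothing further is needed.
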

}

\section{Multiple $\dw$-Appell polynomial sets}

\noindent In this section we introduce the notion of multiple $\dw$-Appell polynomial set and provide some of the main characterizations of such families.

\begin{definition}
A multiple polynomial set $\{P_{n_1,n_2}(x)\}_{n_1+n_2=0}^{\infty}$ is called $\dw$-Appell polynomial set if and only if
\begin{equation}
\dw P_{n_1,n_2}(x)=n_1 P_{n_1-1,n_2}(x)+n_2P_{n_1,n_2-1}(x).
\end{equation}
\end{definition}

\noindent In view of \eqref{char-app}, it follows that the multiple
Charlier polynomials
$\left\{C_{n_1,n_2}^{(a_1,a_2)}(x)\right\}_{n_1+n_2=0}^{\infty}$
form a $\Delta$-Appell polynomial set.

\noindent The following theorem gives some characterizations of $\dw$-Appell polynomial sets.

\begin{theorem}
Let $\{P_{n_1,n_2}(x)\}_{n_1+n_2=0}^{\infty}$ be a polynomial set. The following assertions are equivalent:
\begin{enumerate}
   \item[{\bf (a)}] $\{P_{n_1,n_2}(x)\}_{n_1+n_2=0}^{\infty}$ is a $\dw$-Appell polynomial set.
   \item[{\bf (b)}] $\{P_{n_1,n_2}(x)\}_{n_1+n_2=0}^{\infty}$ is generated by
   \begin{equation}\label{ap2}
      A(t_1,t_2)\big(1+\omega(t_1+t_2)\big)^{\frac{x}{\omega}}=\sum_{n_1=0}^{\infty}\sum_{n_2=0}^{\infty}P_{n_1,n_2}(x)
      \dfrac{t_1^{n_1}}{n_1!}\dfrac{t_2^{n_2}}{n_2!},
   \end{equation}
   where
   \begin{equation}\label{ap1}
   A(t_1,t_2)=\sum_{n_1=0}^{\infty}\sum_{n_2=0}^{\infty}a_{n_1,n_2} {t_1^{n_1} \over {n_1}!}{t_2^{n_2}\over {n_2}!}, \quad a_{0,0}\neq 0.
   \end{equation}
   \item[{\bf (c)}] There exists a sequence $\{a_{n_1,n_2}\}_{n_1,n_1=0}^{\infty}$ with $a_{0,0}\neq 0$ such that
   \begin{equation}
     P_{n_1,n_2}(x)=\sum_{k_1=0}^{n_1}\sum_{k_2=0}^{n_2}\binom{n_1}{k_1}\binom{n_2}{k_2}a_{n_1-k_1,n_2-k_2}x^{(k_1+k_2,\omega)}.
   \end{equation}
   \item[{\bf (d)}] The following addition formula holds true:
   \begin{equation}
   P_{n_1,n_2}(x+y)=\sum_{k_1=0}^{n_1}\sum_{k_2=0}^{n_2}\binom{n_1}{k_1}\binom{n_2}{k_2}P_{n_1-k_1,n_2-k_2}(x)y^{(k_1+k_2,\omega)}.
   \end{equation}
   \item[{\bf(e)}] There exists a sequence $\{a_{n_1,n_2}\}_{n_1,n_1=0}^{\infty}$ with $a_{0,0}\neq 0$ such that
   \begin{equation}
     P_{n_1,n_2}(x)=\sum_{k_1}^{n_1}\sum_{k_2}^{n_2}\binom{n_1}{k_1}\binom{n_2}{k_2}
     \dfrac{(n_1+n_2-k_1-k_2)!}{(n_1+n_2)!} a_{k_1,k_2}\left\{\dw^{k_1+k_2}x^{(n_1+n_2,\omega)}\right\}.
   \end{equation}
\end{enumerate}
\end{theorem}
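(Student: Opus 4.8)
The plan is to establish a cyclic chain of implications linking (a)--(d) and then to attach (e) directly to (c), so that all five become equivalent. Before starting I would record three elementary facts about the generalized falling factorials that will be used repeatedly: the difference rule $\dw x^{(n,\omega)}=n\,x^{(n-1,\omega)}$ (obtained exactly as in the one-variable Theorem~A setting), the single-variable generating relation $\sum_{n\ge0}x^{(n,\omega)}\frac{t^n}{n!}=(1+\omega t)^{x/\omega}$, and the evaluation $\omega^{(m,\omega)}=0$ for $m\ge2$, with $\omega^{(1,\omega)}=\omega$ and $\omega^{(0,\omega)}=1$. From the generating relation and the multinomial expansion $\frac{(t_1+t_2)^m}{m!}=\sum_{k_1+k_2=m}\frac{t_1^{k_1}}{k_1!}\frac{t_2^{k_2}}{k_2!}$ I would first derive the two-variable expansion $\big(1+\omega(t_1+t_2)\big)^{x/\omega}=\sum_{k_1,k_2\ge0}x^{(k_1+k_2,\omega)}\frac{t_1^{k_1}}{k_1!}\frac{t_2^{k_2}}{k_2!}$, which is the workhorse for the generating-function arguments.

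For (b) $\Leftrightarrow$ (c), I would multiply the series for $A(t_1,t_2)$ by the two-variable expansion above and read off the coefficient of $\frac{t_1^{n_1}}{n_1!}\frac{t_2^{n_2}}{n_2!}$; the two-dimensional Cauchy product produces exactly the double sum in (c), and the computation is reversible. For (c) $\Rightarrow$ (a), I would apply $\dw$ termwise to (c), use $\dw x^{(k_1+k_2,\omega)}=(k_1+k_2)x^{(k_1+k_2-1,\omega)}$, split the factor $k_1+k_2$, and apply the absorption identities $k_1\binom{n_1}{k_1}=n_1\binom{n_1-1}{k_1-1}$ and $k_2\binom{n_2}{k_2}=n_2\binom{n_2-1}{k_2-1}$ together with index shifts; the two resulting sums are recognized as $n_1P_{n_1-1,n_2}(x)$ and $n_2P_{n_1,n_2-1}(x)$ via formula (c) at lowered indices. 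To close with (a) $\Rightarrow$ (b), I would set $F(x,t_1,t_2)=\sum_{n_1,n_2}P_{n_1,n_2}(x)\frac{t_1^{n_1}}{n_1!}\frac{t_2^{n_2}}{n_2!}$ and observe that (a) is equivalent, coefficientwise, to the functional equation $\dw F=(t_1+t_2)F$, i.e. $F(x+\omega,\cdot)=\big(1+\omega(t_1+t_2)\big)F(x,\cdot)$; comparing with $G:=A(t_1,t_2)\big(1+\omega(t_1+t_2)\big)^{x/\omega}$, where $a_{n_1,n_2}:=P_{n_1,n_2}(0)$, the difference $H=F-G$ satisfies the same recurrence with all coefficients vanishing at $x=0$.

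The remaining links are short. For (b) $\Rightarrow$ (d), I would use $\big(1+\omega(t_1+t_2)\big)^{(x+y)/\omega}=\big(1+\omega(t_1+t_2)\big)^{x/\omega}\big(1+\omega(t_1+t_2)\big)^{y/\omega}$, so that $F(x+y,\cdot)=F(x,\cdot)\big(1+\omega(t_1+t_2)\big)^{y/\omega}$; expanding the second factor by the workhorse identity and taking the Cauchy product yields (d). For (d) $\Rightarrow$ (a), I would specialize $y=\omega$ in (d): since $\omega^{(m,\omega)}=0$ for $m\ge2$, only the terms with $k_1+k_2\in\{0,1\}$ survive, giving $P_{n_1,n_2}(x+\omega)=P_{n_1,n_2}(x)+\omega\big(n_1P_{n_1-1,n_2}(x)+n_2P_{n_1,n_2-1}(x)\big)$, which is (a) after dividing by $\omega$. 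Finally, for (c) $\Leftrightarrow$ (e) I would iterate the difference rule to get $\dw^{k_1+k_2}x^{(n_1+n_2,\omega)}=\frac{(n_1+n_2)!}{(n_1+n_2-k_1-k_2)!}x^{(n_1+n_2-k_1-k_2,\omega)}$; substituting this into (e) cancels the factorial prefactor, and the reindexing $k_i\mapsto n_i-k_i$ together with $\binom{n_i}{k_i}=\binom{n_i}{n_i-k_i}$ turns the result into (c).

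The step I expect to be the main obstacle is the uniqueness argument inside (a) $\Rightarrow$ (b): one must show that $H=F-G$ is identically zero. The clean way is induction on $N=n_1+n_2$ for the coefficient polynomials $Q_{n_1,n_2}(x)$ of $H$. At each stage the recurrence forces $\dw Q_{n_1,n_2}=0$ (the lower-order terms vanish by the inductive hypothesis, and for $N=0$ the right-hand side is empty), so $Q_{n_1,n_2}$ is an $\omega$-periodic polynomial, hence a constant; since $Q_{n_1,n_2}(0)=P_{n_1,n_2}(0)-a_{n_1,n_2}=0$ by the choice of $a_{n_1,n_2}$, that constant is $0$. Care is needed here because $G$ is not polynomial in $x$, so the equality $F=G$ cannot be argued by evaluating at the lattice $x\in\omega\Z$ alone; the formal-power-series-in-$(t_1,t_2)$ viewpoint, with polynomial coefficients in $x$, is what makes the periodicity-implies-constant lemma applicable and rigorous.
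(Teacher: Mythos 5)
Your proof is correct, and while its core ingredients overlap with the paper's, its implication structure and two of its arguments are genuinely different. The paper proves the pairwise equivalences \textbf{(a)}$\Leftrightarrow$\textbf{(b)}, \textbf{(b)}$\Leftrightarrow$\textbf{(c)}, \textbf{(b)}$\Leftrightarrow$\textbf{(d)} and \textbf{(c)}$\Leftrightarrow$\textbf{(e)}, whereas you run the cycle \textbf{(a)}$\Rightarrow$\textbf{(b)}$\Rightarrow$\textbf{(d)}$\Rightarrow$\textbf{(a)} together with \textbf{(b)}$\Leftrightarrow$\textbf{(c)}$\Rightarrow$\textbf{(a)} and \textbf{(c)}$\Leftrightarrow$\textbf{(e)}. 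The first real difference is in \textbf{(a)}$\Rightarrow$\textbf{(b)}: the paper writes the generating function as $A(x,t_1,t_2)\big(1+\omega(t_1+t_2)\big)^{x/\omega}$, applies $\Delta_{\omega}$ with the product rule, and passes from $\Delta_{\omega}A(x,t_1,t_2)=0$ to ``$A$ is independent of $x$'' without comment — which tacitly uses exactly the periodicity-implies-constant fact for the polynomial coefficients that you isolate; your construction of $G$ with $a_{n_1,n_2}:=P_{n_1,n_2}(0)$ and the induction on $N=n_1+n_2$ for $H=F-G$ makes this step fully rigorous (you should still record that $a_{0,0}=P_{0,0}(0)\neq 0$ because $P_{0,0}$ is a nonzero constant, so the normalization in \eqref{ap1} is met). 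The second difference is your \textbf{(d)}$\Rightarrow$\textbf{(a)} by specializing $y=\omega$, using $\omega^{(m,\omega)}=0$ for $m\geq 2$ to reduce the addition formula to $P_{n_1,n_2}(x+\omega)=P_{n_1,n_2}(x)+\omega\big(n_1P_{n_1-1,n_2}(x)+n_2P_{n_1,n_2-1}(x)\big)$: this is not in the paper at all — the paper's Step 3 announces \textbf{(b)}$\Leftrightarrow$\textbf{(d)} but only carries out \textbf{(b)}$\Rightarrow$\textbf{(d)} — so your specialization actually closes a direction the paper leaves open (setting $x=0$ in \textbf{(d)} to recover \textbf{(c)} would be an equally short alternative). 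Your extra link \textbf{(c)}$\Rightarrow$\textbf{(a)} via the absorption identities $k_i\binom{n_i}{k_i}=n_i\binom{n_i-1}{k_i-1}$ is likewise sound and purely termwise, at the modest cost of a larger implication graph. The remaining steps — the two-dimensional Cauchy product for \textbf{(b)}$\Leftrightarrow$\textbf{(c)}, the factorization $\big(1+\omega(t_1+t_2)\big)^{(x+y)/\omega}=\big(1+\omega(t_1+t_2)\big)^{x/\omega}\big(1+\omega(t_1+t_2)\big)^{y/\omega}$ for \textbf{(b)}$\Rightarrow$\textbf{(d)}, and the iterated rule $\Delta_{\omega}^{k_1+k_2}x^{(n_1+n_2,\omega)}=\frac{(n_1+n_2)!}{(n_1+n_2-k_1-k_2)!}\,x^{(n_1+n_2-k_1-k_2,\omega)}$ with the reindexing $k_i\mapsto n_i-k_i$ for \textbf{(c)}$\Leftrightarrow$\textbf{(e)} — coincide with the paper's Steps 2--4.
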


\begin{proof}
{\bf Step 1:  {\bf (a)} $\Longleftrightarrow $ {\bf (b)}}

\noindent We first prove that the assertion {\bf(a)} is equivalent to the
assertion {\bf(b)}.
\begin{eqnarray*}
\sum_{n_1=0}^{\infty}\sum_{n_2=0}^{\infty}\dw P_{n_1,n_2}(x) \dfrac{t^{n_1}}{n_1!}\dfrac{t^{n_2}}{n_2!}&=&
\dw \left(\sum_{n_1=0}^{\infty}\sum_{n_2=0}^{\infty}P_{n_1,n_2}(x)\dfrac{t^{n_1}}{n_1!}\dfrac{t^{n_2}}{n_2!}\right)\\
&=& \dw\left(A(t_1,t_2)\big(1+\omega(t_1+t_2)\big)^{\frac{x}{\omega}}\right)\\
&=& (t_1+t_2)A(t_1,t_2)\big(1+\omega(t_1+t_2)\big)^{\frac{x}{\omega}}\\
&=& \sum_{n_1=0}^{\infty}\sum_{n_2=0}^{\infty}P_{n_1,n_2}(x)\dfrac{t^{n_1+1}}{n_1!}\dfrac{t^{n_2}}{n_2!}
+\sum_{n_1=0}^{\infty}\sum_{n_2=0}^{\infty}P_{n_1,n_2}(x)\dfrac{t^{n_1}}{n_1!}\dfrac{t^{n_2+1}}{n_2!}\\
&=&\sum_{n_1=0}^{\infty}\sum_{n_2=0}^{\infty}\left(n_1P_{n_1-1,n_2}(x)+n_2P_{n_1,n_2}(x)\right)\dfrac{t^{n_1}}{n_1!}\dfrac{t^{n_2}}{n_2!}.
\end{eqnarray*}
This proves that the second assertion implies the first. Now, let us assume that the first assertion holds and assume that
\begin{equation}\label{prov1}
\sum_{n_1=0}^{\infty}\sum_{n_2=0}^{\infty}P_{n_1,n_2}(x)\dfrac{t^{n_1}}{n_1!}\dfrac{t^{n_2}}{n_2}=A(x,t_1,t_2)\big(1+\omega(t_1+t_2)\big)^{\frac{x}{\omega}}.
\end{equation}
Then, we apply $\dw$ to both sides of \eqref{prov1}. For the right-hand side, we obtain
\begin{eqnarray*}
&&\dw \left(A(x,t_1,t_2)\big(1+\omega(t_1+t_2)\big)^{\frac{x}{\omega}}\right)\\
&&\hspace*{1cm} =A(x,t_1,t_2)\dw \big(1+\omega(t_1+t_2)\big)^{\frac{x}{\omega}}+\dw A(x,t_1,t_2)\big(1+\omega(t_1+t_2)\big)^{\frac{x+\omega}{\omega}}\\
&&\hspace*{1cm}=\left[(t_1+t_2)A(x,t_1,t_2)+(1+\omega(t_1+t_2))\dw A(x,t_1,t_2)       \right]\big(1+\omega(t_1+t_2)\big)^{\frac{x}{\omega}}.
\end{eqnarray*}
The left-hand side gives
\begin{eqnarray*}
\sum_{n_1=0}^{\infty}\sum_{n_2=0}^{\infty}\dw P_{n_1,n_2}(x)\dfrac{t^{n_1}}{n_1!}\dfrac{t^{n_2}}{n_2}&=&
\sum_{n_1=0}^{\infty}\sum_{n_2=0}^{\infty}(n_1P_{n_1-1,n_2}(x)+n_2P_{n_1,n_2-1})\dfrac{t^{n_1}}{n_1!}\dfrac{t^{n_2}}{n_2}\\
&=& (t_1+t_2)A(x,t_1,t_2)\big(1+\omega(t_1+t_2)\big)^{\frac{x}{\omega}}.
\end{eqnarray*}
It follows that
\[(1+\omega(t_1+t_2))\dw A(x,t_1,t_2)=0\quad \forall t_1,t_2, x.\]
So $\dw A(x,t_1,t_2)=0$ for each $\omega\neq 0$. Hence
$A(x,t_1,t_2)=A(t_1,t_2)$. \\

\noindent {\bf Step 2: {\bf (b)} $\Longleftrightarrow$ {\bf (c)}}
The series expansion of
$\big(1+\omega(t_1+t_2)\big)^{\frac{x}{\omega}}$ gives:
\[
\big(1+\omega(t_1+t_2)\big)^{\frac{x}{\omega}}=\sum_{k_1=0}^{\infty}\sum_{k_2=0}^{\infty}x^{(k_1+k_2,\omega)}\dfrac{t^{k_1}}{k_1!}\dfrac{t^{k_2}}{k_2!}.
\]
Using the expression of $A(t_1,t_2)$ given by (\ref{ap1}), we have
\begin{eqnarray*}
  A(t_1,t_2)\big(1+\omega(t_1+t_2)\big)^{\frac{x}{\omega}} &=&
  \left( \sum_{n_1=0}^{\infty}\sum_{n_2=0}^{\infty}a_{n_1,n_2} {t_1^{n_1} \over {n_1}!}{t_2^{n_2}\over
  {n_2}!}\right)\left(\sum_{k_1=0}^{\infty}\sum_{k_2=0}^{\infty}x^{(k_1+k_2,\omega)}\dfrac{t^{k_1}}{k_1!}\dfrac{t^{k_2}}{k_2!} \right)  \\
  &=&\sum_{n_1=0}^{\infty}\sum_{n_2=0}^{\infty}\left\{ \sum_{k_1=0}^{n_1}\sum_{k_2=0}^{n_2}
  {a_{n_1-k_1,n_2-k_2} \over (n_1-k_1)!(n_2-k_2)!}
  {x^{(k_1+k_2,\omega)}\over k_1!k_2!}\right\}t_1^{n_1}t_2^{n_2}\\
  &=&\sum_{n_1=0}^{\infty}\sum_{n_2=0}^{\infty}\left\{
  \sum_{k_1=0}^{n_1}\sum_{k_2=0}^{n_2}
\binom{n_1}{k_1}\binom{n_2}{k_2}a_{n_1-k_1,n_2-k_2}x^{(k_1+k_2,\omega)}
  \right\}{t_1^{n_1} \over {n_1}!}{t_2^{n_2}\over
  {n_2}!}.
\end{eqnarray*}
 Using (\ref{ap2}), then, this proves that {\bf (b)} implies {\bf
 (c)}.

\noindent  The generating function $G(x,t_1,t_2)$ of the sequence
 $\{P_{n_1,n_2}(x)\}$ reads
 \begin{eqnarray*}
   G(x,t_1,t_2) &=& \sum_{n_1=0}^{\infty}\sum_{n_2=0}^{\infty} P_{n_1,n_2}(x){t_1^{n_1} \over {n_1}!}{t_2^{n_2}\over
  {n_2}!} \\
    &=& \sum_{n_1=0}^{\infty}\sum_{n_2=0}^{\infty}\left\{
  \sum_{k_1=0}^{n_1}\sum_{k_2=0}^{n_2}
\binom{n_1}{k_1}\binom{n_2}{k_2}a_{n_1-k_1,n_2-k_2}x^{(k_1+k_2,\omega)}
  \right\}{t_1^{n_1} \over {n_1}!}{t_2^{n_2}\over
  {n_2}!} \\
    &=& \left( \sum_{n_1=0}^{\infty}\sum_{n_2=0}^{\infty}a_{n_1,n_2} {t_1^{n_1} \over {n_1}!}{t_2^{n_2}\over
  {n_2}!}\right)\left(\sum_{k_1=0}^{\infty}\sum_{k_2=0}^{\infty}x^{(k_1+k_2,\omega)}\dfrac{t^{k_1}}{k_1!}\dfrac{t^{k_2}}{k_2!} \right)  \\
    &=& \left( \sum_{n_1=0}^{\infty}\sum_{n_2=0}^{\infty}a_{n_1,n_2} {t_1^{n_1} \over {n_1}!}{t_2^{n_2}\over
  {n_2}!}\right)\big(1+\omega(t_1+t_2)\big)^{\frac{x}{\omega}}.
 \end{eqnarray*}
 Then
 \[
G(x,t_1,t_2)=A(t_1,t_2)\big(1+\omega(t_1+t_2)\big)^{\frac{x}{\omega}},
 \]
 where $A(t_1,t_2)$ is given by (\ref{ap1}).\\

\noindent {\bf Step 3: {\bf (b)} $\Longleftrightarrow$ {\bf (d)}}

 \noindent
We prove that the assertion {\bf (b)} is equivalent to the assertion
{\bf (d)}. Using Lemma \ref{lemma-sum}, and the assertion {\bf (b)},
we have
\begin{eqnarray*}
\sum_{n_1=0}^{\infty}\sum_{n_2=0}^{\infty}P_{n_1,n_2}(x+y) \dfrac{t^{n_1}}{n_1!}\dfrac{t^{n_2}}{n_2}&=&
 A(t_1,t_2)\left( 1+\omega(t_1+t_2)\right)^{\frac{x+y}{\omega}}\\
&=& \left(\sum_{n_1=0}^{\infty}\sum_{n_2=0}^{\infty}P_{n_1,n_2}(x) \dfrac{t^{n_1}}{n_1!}\dfrac{t^{n_2}}{n_2}\right)\left(\sum_{k_1=0}^{\infty}\dfrac{y^{(k_1,\omega)}}{k_1!}(t_1+t_2)^{k_1}\right)\\
&=& \left(\sum_{n_1=0}^{\infty}\sum_{n_2=0}^{\infty}P_{n_1,n_2}(x) \dfrac{t^{n_1}}{n_1!}\dfrac{t^{n_2}}{n_2}\right)\left(\sum_{k_1=0}^{\infty}\sum_{j=0}^{k_1}y^{(k_1,\omega)}\dfrac{t_1^{j}}{j!}\dfrac{t_2^{k_1-j}}{(k_1-j)!}\right)\\
&=& \left(\sum_{n_1=0}^{\infty}\sum_{n_2=0}^{\infty}P_{n_1,n_2}(x) \dfrac{t^{n_1}}{n_1!}\dfrac{t^{n_2}}{n_2}\right)\left(\sum_{k_1=0}^{\infty}\sum_{k_2=0}^{\infty}y^{(k_1+k_2,\omega)}\dfrac{t_1^{k_1}}{k_1!}\dfrac{t_2^{k_2}}{k_2!}\right)\\
\end{eqnarray*}
\begin{eqnarray*}
\phantom{aaaaaaaaaaaaaaaa}&=&
\sum_{n_1=0}^{\infty}\sum_{n_2=0}^{\infty}\left\{\sum_{k_1=0}^{n_1}\sum_{k_2=0}^{n_2}\binom{n_1}{k_1}\binom{n_2}{k_2}
P_{n_1-k_1,n_2-k_2}(x)y^{(k_1+k_2,\omega)}\right\}\dfrac{t_1^{n_1}}{n_1!}\dfrac{t_2^{n_2}}{n_2!}.\\
\end{eqnarray*}

\noindent {\bf Step 4: {\bf (c)} $\Longleftrightarrow$ {\bf (e)}}
It can be easily proved by induction that
\begin{eqnarray*}
\dw^{k_1+k_2}x^{(n_1+n_2,\omega)}&=&
{(n_1+n_2)!\over (n_1+n_2-k_1-k_2)!}x^{(n_1+n_2-k_1-k_2,\omega)}.
\end{eqnarray*}
Hence,
\begin{eqnarray*}
  P_{n_1,n_2}(x)&=&\sum_{k_1=0}^{n_1}\sum_{k_2=0}^{n_2}\binom{n_1}{k_1}\binom{n_2}{k_2}a_{n_1-k_1,n_2-k_2}x^{(k_1+k_2,\omega)} \\
  &=& \sum_{k_1=0}^{n_1}\sum_{k_2=0}^{n_2}\binom{n_1}{n_1-k_1}\binom{n_2}{n_2-k_2}a_{k_1,k_2}x^{(n_1+n_2-k_1-k_2,\omega)} \\
   &=& \sum_{k_1=0}^{n_1}\sum_{k_2=0}^{n_2}\binom{n_1}{n_1-k_1}\binom{n_2}{n_2-k_2}a_{k_1,k_2}
   {(n_1+n_2-k_1-k_2)!\over
   (n_1+n_2)!}\dw^{k_1+k_2}x^{(n_1+n_2,\omega)}\\
   &=& \sum_{k_1=0}^{n_1}\sum_{k_2=0}^{n_2}\binom{n_1}{k_1}\binom{n_2}{k_2}a_{k_1,k_2}
   {(n_1+n_2-k_1-k_2)!\over
   (n_1+n_2)!}\dw^{k_1+k_2}x^{(n_1+n_2,\omega)}.
\end{eqnarray*}
 Thus the
announced equivalence is proved.
\end{proof}

\section{Multiple $\Delta$-Appell orthogonal polynomials}

\noindent In this section, we characterize those multiple orthogonal
polynomials sequences which are also multiple $\Delta$-Appell
sequences. Note that when $\omega=1$ we simply write $\Delta$
instead of $\Delta_1$.  Note also that the following product rule is
valid for the operator $\Delta$
\begin{equation}\label{e0}\Delta[f(x)g(x)]=f(x+1)\Delta g(x)+g(x)\Delta f(x).\end{equation} We
recall that every multiple orthogonal polynomial sequence
$\{P_{\vec{n}}(x)\}_{|\vec{n}|=0}^{\infty}$ satisfies a recurrence
relation of the form \cite{haneczok,ismail2009}
\begin{equation}\label{rtrr}
xP_{\vec{n}}(x)=P_{\vec{n}+\vec{e}_k}(x)+b_{\vec{n},k}P_{\vec{n}}(x)+\sum_{j=1}^{r}a_{\vec{n},j}P_{\vec{n}-\vec{e}_j}(x).
\end{equation}

\noindent When $r=2$, \eqref{rtrr} takes the form
\begin{equation}\label{rtrr-2}
xP_{m,n}(x)=P_{m+1,n}(x)+E_{m,n}P_{m,n}(x)+F_{m,n}P_{m-1,n}(x)+G_{m,n}P_{m,n-1}(x).
\end{equation}
or
\begin{equation}\label{rtrr-3}
xP_{m,n}(x)=P_{m,n+1}(x)+\tilde{E}_{m,n}P_{m,n}(x)+\tilde{F}_{m,n}P_{m,n-1}(x)+\tilde{G}_{m,n}P_{m-1,n}(x).
\end{equation}
Both \eqref{rtrr-2} and \eqref{rtrr-3} are valid. In this section, we will use the form \eqref{rtrr-2}.

\textcolor{black}{
\begin{theorem}
The multiple orthogonal polynomial sequence $\{P_{m,n}(x)\}_{m,n=0}^{\infty}$ that are also multiple $\Delta$-Appell sequence are the multiple Charlier polynomials.
%
\end{theorem}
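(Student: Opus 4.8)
The plan is to combine the two structural descriptions of the sequence. On one hand, since $\{P_{m,n}\}$ is a multiple $\Delta$-Appell set, the characterization theorem of the previous section (assertion \textbf{(b)}, taken with $\omega=1$) supplies the generating function
\[
G(x,t_1,t_2)=\sum_{m=0}^{\infty}\sum_{n=0}^{\infty}P_{m,n}(x)\frac{t_1^{m}}{m!}\frac{t_2^{n}}{n!}=A(t_1,t_2)\big(1+t_1+t_2\big)^{x},\qquad A(0,0)\neq 0,
\]
together with the defining relation $\Delta P_{m,n}=mP_{m-1,n}+nP_{m,n-1}$. On the other hand, multiple orthogonality forces the nearest-neighbour recurrences \eqref{rtrr-2} and \eqref{rtrr-3}. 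After normalizing $A(0,0)=1$, the goal is to prove $A(t_1,t_2)=e^{-(a_1t_1+a_2t_2)}$ for constants $a_1,a_2$, which by Theorem \ref{generalized-generating-function} (the generating function \eqref{genfunction1}) identifies $P_{m,n}=C_{m,n}^{(a_1,a_2)}$.

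First I would extract the form of the recurrence coefficients. Applying $\Delta$ to \eqref{rtrr-2}, using the product rule \eqref{e0} in the form $\Delta[xP_{m,n}]=(x+1)\Delta P_{m,n}+P_{m,n}$, then inserting the Appell relation everywhere and using \eqref{rtrr-2} (and its index shifts) to remove every term $xP_{i,j}$, produces an identity among the $P_{i,j}$. Since $\{P_{m,n}\}$ is a polynomial set, any finite family of them is linearly independent, so coefficients may be compared. Matching $P_{m-1,n}$ and $P_{m,n-1}$ gives $E_{m,n}=E_{m-1,n}+1=E_{m,n-1}+1$, hence $E_{m,n}=a_1+m+n$; matching $P_{m-2,n}$ and $P_{m,n-2}$ shows $F_{m,n}/m$ and $G_{m,n}/n$ are independent of $m$ and of $n$; and matching $P_{m-1,n-1}$ couples them into $F_{m,n}=a_1 m+\gamma mn$ and $G_{m,n}=a_2 n-\gamma mn$, for constants $a_1,a_2$ and one undetermined constant $\gamma$. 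The slices $n=0$ and $m=0$ reduce, through \eqref{rtrr-2} and \eqref{rtrr-3} respectively, to genuine one-variable $\Delta$-Appell orthogonal families, which by the classical uniqueness of Charlier polynomials (\cite{carlitz}) are the classical Charlier polynomials; this pins down $A(t_1,0)=e^{-a_1t_1}$ and $A(0,t_2)=e^{-a_2t_2}$.

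The decisive step is to show $\gamma=0$. I would translate \eqref{rtrr-2} into a first-order equation for $u=\partial_{t_1}\log A$ and $v=\partial_{t_2}\log A$ by inserting $G=A(1+t_1+t_2)^{x}$ into the generating-function image of the recurrence; the $x$-dependent contributions cancel identically (their coefficient sums to $1+t_1+t_2$), leaving
\[
(1+t_1-\gamma t_1t_2)\,u+t_2(1+\gamma t_1)\,v=-a_1(1+t_1)-a_2 t_2,
\]
with a symmetric equation issuing from \eqref{rtrr-3}. Comparing the two recurrence forms first shows that $u-v$ is affine-linear, so the only admissible nonlinear part of $\log A$ is a single multiple $q\,t_1t_2$; feeding $\log A=-a_1t_1-a_2t_2+q\,t_1t_2+\cdots$ into the displayed equation and matching homogeneous degrees annihilates all higher-order terms and yields the two relations $q\gamma=0$ and $2q=-\gamma(a_1-a_2)$, which together force $q=0$. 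Hence $A(t_1,t_2)=e^{-(a_1t_1+a_2t_2)}$ and the sequence is the multiple Charlier family.

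The main obstacle is exactly this elimination of the spurious constant $\gamma$: the Appell property together with a single recurrence \eqref{rtrr-2} determines $E_{m,n}$ but leaves the off-diagonal coefficients $F_{m,n},G_{m,n}$ free up to $\gamma$, so one genuinely needs the second orthogonality recurrence \eqref{rtrr-3} (equivalently, the integrability condition $\partial_{t_2}u=\partial_{t_1}v$ for the gradient of $\log A$) to close the argument. The remaining points requiring care are the bookkeeping in the coefficient comparison after applying $\Delta$, the justification of equating coefficients via linear independence of the $P_{i,j}$, and the verification that the boundary slices really do collapse to one-variable three-term recurrences.
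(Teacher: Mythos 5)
Your proposal opens exactly as the paper's proof does---apply $\Delta$ to \eqref{rtrr-2}, use the product rule \eqref{e0} together with the Appell relation, compare coefficients, and obtain $E_{m,n}=E_{0,0}+m+n$, $F_{m,n}=mF_{1,n}$, $G_{m,n}=nG_{m,1}$ with the coupling $G_{m,1}+F_{1,n}=G_{m-1,1}+F_{1,n-1}$---but at the decisive step it genuinely diverges from the paper, and your route is the sounder one. The paper eliminates the residual constant (its $\lambda$, your $\gamma$) within this single system by asserting $G_{m,1}+F_{1,n}=G_{1,1}+F_{1,1}$ for all $m,n$; however, iterating the coupling reaches the index $(1,1)$ only along the diagonal $m=n$, while for $m\neq n$ the linear formulas give $G_{m,1}+F_{1,n}=G_{1,1}+F_{1,1}+\lambda(m-n)$, which satisfies the coupling for \emph{every} $\lambda$. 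Your displayed equation $(1+t_1-\gamma t_1t_2)u+t_2(1+\gamma t_1)v=-a_1(1+t_1)-a_2t_2$, with $u=\partial_{t_1}\log A$ and $v=\partial_{t_2}\log A$, is indeed exactly the content of \eqref{rtrr-2} for an Appell family (I verified the $x$-dependent terms cancel as you claim), and as a single first-order equation for $\log A$ it admits formal power-series solutions for every $\gamma$. So \eqref{rtrr-2} plus the Appell property alone cannot force $\gamma=0$: your diagnosis that the second recurrence \eqref{rtrr-3}---that is, the full strength of multiple orthogonality---is genuinely needed is correct, and your use of Carlitz's one-variable characterization on the slices $\{P_{m,0}\}$ and $\{P_{0,n}\}$ to pin down $A(t_1,0)=e^{-a_1t_1}$ and $A(0,t_2)=e^{-a_2t_2}$, followed by degree-by-degree matching in the two equations, closes a hole that the paper's shortcut leaves open.

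Two caveats. First, your justification for equating coefficients (``any finite family of them is linearly independent'') is false as stated: by \eqref{rec-1} one has $C_{m,n-1}^{(a_1,a_2)}-C_{m-1,n}^{(a_1,a_2)}=(a_2-a_1)C_{m-1,n-1}^{(a_1,a_2)}$, so the very polynomials appearing in the compared expansions can be linearly dependent; a rigorous extraction of the coefficient system requires an orthogonality argument in the style of \cite{van}. This gap is shared with the paper, so it does not set your argument back relative to it, but it should be flagged rather than attributed to linear independence. Second, minor bookkeeping in your endgame: matching degree $1$ in your equation already yields $q=0$ outright; the degree-$2$ comparison instead constrains the \emph{cubic} part of $\log A$ (writing it as $rt_1^2t_2+st_1t_2^2$, the first equation gives $s=0$ and $2r=-\gamma(a_1-a_2)$, and the symmetric equation from \eqref{rtrr-3} forces $r=0$), after which a straightforward induction annihilates all higher mixed terms, the pure powers being excluded by the slice data. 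The identification $A(t_1,t_2)=e^{-(a_1t_1+a_2t_2)}$ and hence $P_{m,n}=C_{m,n}^{(a_1,a_2)}$ via \eqref{genfunction1} then goes through as you describe, with $\gamma=0$ following a posteriori from the uniqueness of the nearest-neighbour recurrence coefficients.
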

}
\begin{proof}
Applying the operator $\Delta$ to the left-hand side of (\ref{rtrr-2})
and using (\ref{e0}) for $f(x)=x$ and $g(x)=P_{m,n}(x)$, we have:
\begin{eqnarray}
  \Delta[xP_{m,n}(x)] &=& (x+1)\Delta P_{m,n}(x)+P_{m,n}(x) \nonumber\\
   &=&
   (x+1)\left(mP_{m-1,n}(x)+nP_{m,n-1}(x)\right)+P_{m,n}(x)\nonumber.
\end{eqnarray}
Using the fact that $\{P_{m,n}(x)\}_{m,n=0}^{\infty}$ is also a
multiple $\Delta$-Appell sequence, the previous equation gives:
\begin{eqnarray}
  \Delta[xP_{m,n}(x)]  &=& (m+1)P_{m,n}(x)+nP_{m+1,n-1}(x)+m\left(E_{m-1,n}+1\right)P_{m-1,n}(x)\nonumber\\
   &&+ n\left(E_{m,n-1}+1\right)P_{m,n-1}(x)
   +\left(mG_{m-1,n}+nF_{m,n-1}\right)P_{m-1,n-1}(x)\nonumber\\
   &&+mF_{m-1,n}P_{m-2,n}(x)+nG_{m,n-1}P_{m,n-2}(x).\label{e2}
\end{eqnarray}
Applying the operator $\Delta$ to the right-hand side of (\ref{rtrr-2})
gives:
\begin{eqnarray}
  \Delta[xP_{m,n}(x)]  &=& (m+1)P_{m,n}(x)+nP_{m+1,n-1}(x)
   +\left(mG_{m,n}+nF_{m,n}\right)P_{m-1,n-1}(x)\nonumber\\
   &&+mE_{m,n}P_{m-1,n}(x)+nE_{m,n}P_{m,n-1}(x)+(m-1)F_{m,n}P_{m-2,n}(x)\nonumber\\
   &&+(n-1)G_{m,n}P_{m,n-2}(x).\label{e3}
\end{eqnarray}
(\ref{e2}) and (\ref{e3}) give the following system
\[
\left\{
\begin{array}{l}
  mG_{m,n}+nF_{m,n}=mG_{m-1,n}+nF_{m,n-1}, \\
  E_{m,n}=E_{m-1,n}+1, \\
  E_{m,n}=E_{m,n-1}+1, \\
  (m-1)F_{m,n}=mF_{m-1,n}, \\
  (n-1)G_{m,n}=nG_{m,n-1}.
\end{array}
\right.
\]
The previous system gives:
\[
\left\{
\begin{array}{l}
  F_{m,n}=mF_{1,n}, \\
  G_{m,n}=nG_{m,1}, \\
  E_{m,n}=m+n+E_{0,0}, \\
  G_{m,1}+F_{1,n}=G_{m-1,1}+F_{1,n-1}.
\end{array}
\right.
\]
Note that 
\[G_{m,1}+F_{1,n}=G_{m-1,1}+F_{1,n-1}\implies G_{m,1}-G_{m-1,1}=F_{1,n-1}-F_{1,n}=\lambda,\]
where $\lambda$ is a constant to be determined. The previous equations imply
\[G_{m,1}=(m-1)\lambda+G_{1,1},\quad\textrm{and}\quad F_{1,n}=-(n-1)\lambda+F_{1,1}.\]
Hence
\[G_{m,1}+F_{1,n}=G_{1,1}+F_{1,1}=G_{1,1}+F_{1,1}+\lambda(m-n)\quad \forall m,n.\]
This shows that $\lambda=0$ and so 
\[F_{m,n}=mF_{1,1},\quad G_{m,n}=nG_{1,1},\quad E_{m,n}=m+n+E_{0,0},\quad E_{0,0},F_{1,1}, G_{1,1}\in\R.\]
The conclusion follows from the recurrence relation \eqref{rec-3} of the multiple Charlier polynomials.
\end{proof}

\section{General results}

\noindent More generaly, we have the following definition and results.

\begin{definition}
A multiple polynomial set $\{P_{\vec{n}}(x)\}_{|\vec{n}|=0}^{\infty}$ is called $\dw$-Appell polynomial set if and only if
\begin{equation}
\dw P_{\vec{n}}(x)=\sum_{j=1}^r n_j P_{\vec{n}-\vec{e}_j}(x).
\end{equation}
\end{definition}

\begin{theorem}
Let $\{P_{\vec{n}}(x)\}_{|\vec{n}|=0}^{\infty}$ be a polynomial set. The following assertions are equivalent:
\begin{enumerate}
   \item $\{P_{\vec{n}}(x)\}_{|\vec{n}|=0}^{\infty}$ is a $\dw$-Appell polynomial set.
   \item $\{P_{\vec{n}}(x)\}_{|\vec{n}|=0}^{\infty}$ is generated by
   \begin{equation}
      A(t_1,t_2,\ldots,t_n)\big(1+\omega(t_1+t_2+\cdots t_r)\big)^{\frac{x}{\omega}}=\sum_{n_1=0}^{\infty}\sum_{n_2=0}^{\infty}
      \cdots\sum_{n_r=0}^{\infty}P_{\vec{n}}(x)\dfrac{t_1^{n_1}t_2^{n_2}\cdots t_r^{n_r}}{{n}_1!n_2!\cdots n_r!},
   \end{equation}
   where
   \begin{equation}
   A(t_1,t_2,\ldots,t_r)=\sum_{n_1=0}^{\infty}\sum_{n_2=0}^{\infty}\cdots\sum_{n_r=0}^{\infty}a_{\vec{n}}
   \dfrac{t_1^{n_1}t_2^{n_2}\cdots t_r^{n_r}}{{n}_1!n_2!\cdots n_r!}
   \cdots t_r^{n_r}, \quad a_{\vec{0}}\neq {0}.
   \end{equation}
   \item There exists a sequence $\{a_{\vec{n}}\}_{|\vec{n}|=0}^{\infty}$ with $a_{\vec{0}}\neq 0$ such that
   \[
P_{\vec{n}}(x)=\sum_{k_1=0}^{n_1}\sum_{k_2=0}^{n_2}\cdots\sum_{k_r=0}^{n_r}\binom{n_1}{k_1}\cdots\binom{n_r}{k_r}a_{\vec{n}-\vec{k}}x^{(k_1+\cdots+k_r,\omega)}
   \]
   \item The following addition formulas holds true:
   \begin{equation}
   P_{\vec{n}}(x+y)=\sum_{k_1=0}^{n_1}\sum_{k_2=0}^{n_2}\cdots\sum_{k_r=0}^{n_r}\binom{n_1}{k_1}\ldots\binom{n_r}{k_r}P_{n_1-k_1,\ldots,n_r-k_r}(x)y^{(k_1+\ldots +k_r,\omega)}.
   \end{equation}
   \item There exists a sequence $\{a_{\vec{n}}\}_{|\vec{n}|=0}^{\infty}$ with $a_{\vec{0}}\neq 0$ such that
   \begin{eqnarray*}
     P_{\vec{n}}(x)=\sum_{k_1=0}^{n_1}\cdots\sum_{k_r=0}^{n_r}\binom{n_1}{k_1}\ldots\binom{n_r}{k_r}
     \dfrac{(|\vec{n}|-|\vec{k}|)!}{|\vec{n}|!} a_{\vec{k}}\left\{\dw^{|\vec{k}|}x^{(|\vec{n}|,\omega)}\right\}.
   \end{eqnarray*}
\end{enumerate}
\end{theorem}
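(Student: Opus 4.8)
The plan is to establish the equivalences exactly as in the $r=2$ case treated in the previous theorem, carrying the argument through in multi-index notation. I would prove the chain $(1)\Leftrightarrow(2)$, $(2)\Leftrightarrow(3)$, $(2)\Leftrightarrow(4)$ and $(3)\Leftrightarrow(5)$, which together yield the full equivalence. The single computational fact underlying everything is the multinomial expansion
\[
\big(1+\omega(t_1+\cdots+t_r)\big)^{x/\omega}=\sum_{k_1=0}^{\infty}\cdots\sum_{k_r=0}^{\infty}x^{(k_1+\cdots+k_r,\omega)}\frac{t_1^{k_1}\cdots t_r^{k_r}}{k_1!\cdots k_r!},
\]
obtained by expanding $(1+\omega s)^{x/\omega}=\sum_m x^{(m,\omega)}s^m/m!$ with $s=t_1+\cdots+t_r$ and applying the multinomial theorem to $s^m$.

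For $(1)\Leftrightarrow(2)$ I would first record the key identity $\dw\big(1+\omega(t_1+\cdots+t_r)\big)^{x/\omega}=(t_1+\cdots+t_r)\big(1+\omega(t_1+\cdots+t_r)\big)^{x/\omega}$, which follows from $\dw x^{(m,\omega)}=m\,x^{(m-1,\omega)}$ together with the expansion above. Applying $\dw$ to the generating function in $(2)$ and matching the coefficients of $t_1^{n_1}\cdots t_r^{n_r}/(n_1!\cdots n_r!)$ then reproduces the defining relation $\dw P_{\vec{n}}=\sum_{j}n_j P_{\vec{n}-\vec{e}_j}$, giving $(2)\Rightarrow(1)$. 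For the converse I would posit that the generating function factors as $A(x,t_1,\ldots,t_r)\big(1+\omega(t_1+\cdots+t_r)\big)^{x/\omega}$ with $A$ a priori depending on $x$, apply $\dw$ using the Leibniz rule $\dw(fg)(x)=f(x)\dw g(x)+g(x+\omega)\dw f(x)$, and compare with the left-hand side supplied by $(1)$; the terms in $(t_1+\cdots+t_r)A$ cancel and one is left with $\big(1+\omega(t_1+\cdots+t_r)\big)\dw A(x,t_1,\ldots,t_r)=0$, forcing $\dw A=0$ and hence $A$ independent of $x$. This converse is the only step with genuine content, and I expect it to be the main obstacle, precisely because it is where the Leibniz rule and the cancellation must be handled correctly.

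The remaining equivalences are bookkeeping. For $(2)\Leftrightarrow(3)$ I would multiply the series $A(t_1,\ldots,t_r)$ by the multinomial expansion above and collect the coefficient of $t_1^{n_1}\cdots t_r^{n_r}/(n_1!\cdots n_r!)$ via the $r$-fold Cauchy product, which is the iterated form of Lemma \ref{lemma-sum}. For $(2)\Leftrightarrow(4)$ I would substitute $x\mapsto x+y$, split $\big(1+\omega(t_1+\cdots+t_r)\big)^{(x+y)/\omega}$ as a product of the $x$- and $y$-factors, expand the $y$-factor by the multinomial identity, and again collect coefficients. Finally, for $(3)\Leftrightarrow(5)$ I would prove by induction the operational identity $\dw^{|\vec{k}|}x^{(|\vec{n}|,\omega)}=\frac{|\vec{n}|!}{(|\vec{n}|-|\vec{k}|)!}\,x^{(|\vec{n}|-|\vec{k}|,\omega)}$, substitute it into $(3)$, and re-index the binomial coefficients using $\binom{n_j}{k_j}=\binom{n_j}{n_j-k_j}$. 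The only care needed throughout is the multi-index rearrangement of the $r$-fold series, which generalizes Lemma \ref{lemma-sum} verbatim and introduces no new difficulty beyond notation.
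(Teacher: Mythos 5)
Your proposal is correct and follows essentially the same route as the paper: the paper proves the $r=2$ case in Section~3 via exactly your chain of equivalences --- (a)$\Leftrightarrow$(b) by applying $\Delta_{\omega}$ to the generating function and forcing $\Delta_{\omega}A=0$ through the Leibniz rule, (b)$\Leftrightarrow$(c) by the Cauchy product with the expansion of $\big(1+\omega(t_1+t_2)\big)^{x/\omega}$, (b)$\Leftrightarrow$(d) by splitting the $(x+y)$-power, and (c)$\Leftrightarrow$(e) by the inductive identity for $\Delta_{\omega}^{k_1+k_2}x^{(n_1+n_2,\omega)}$ --- and then states the general $r$ theorem without proof as the verbatim multi-index generalization, which is precisely what you carry out.
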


\textcolor{black}{
\begin{theorem}
The multiple orthogonal polynomial sequence $\{P_{\vec{n}}(x)\}_{|\vec{n}|=0}^{\infty}$ that are also multiple $\Delta$-Appell sequence are the multiple Charlier polynomials $\{C_{\vec{n}}^{\vec{a}}(x)\}_{|\vec{n}|=0}^{\infty}$.
\end{theorem}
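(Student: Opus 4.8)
The plan is to reproduce, in the general $r$-variable setting, the computation carried out for $r=2$ in the proof of the corresponding theorem of Section 4. Fixing the shift direction $k=1$, I start from the nearest-neighbour recurrence \eqref{rtrr},
\[
xP_{\vec{n}}(x)=P_{\vec{n}+\vec{e}_1}(x)+b_{\vec{n},1}P_{\vec{n}}(x)+\sum_{j=1}^{r}a_{\vec{n},j}P_{\vec{n}-\vec{e}_j}(x),
\]
and apply the operator $\Delta$ to both sides. On the left-hand side the product rule \eqref{e0} with $f(x)=x$ gives $(x+1)\Delta P_{\vec{n}}(x)+P_{\vec{n}}(x)$; the multiple $\Delta$-Appell property then replaces $\Delta P_{\vec{n}}(x)$ by $\sum_{j}n_jP_{\vec{n}-\vec{e}_j}(x)$, and each resulting factor $xP_{\vec{n}-\vec{e}_j}(x)$ is re-expanded once more by \eqref{rtrr}. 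On the right-hand side I apply $\Delta$ termwise, again through the Appell property, to $P_{\vec{n}+\vec{e}_1}$, $P_{\vec{n}}$ and the $P_{\vec{n}-\vec{e}_j}$.

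Equating the two expansions and collecting the coefficient of each basis polynomial $P_{\vec{m}}(x)$ yields a finite linear system tying the coefficients $b_{\vec{n},1}$ and $a_{\vec{n},j}$ at index $\vec{n}$ to those at the neighbouring indices $\vec{n}-\vec{e}_i$. The diagonal comparison produces the shift relations $b_{\vec{n},1}=b_{\vec{n}-\vec{e}_i,1}+1$ for every $i$, whence $b_{\vec{n},1}=|\vec{n}|+b_{\vec{0},1}$. Each off-diagonal direction contributes a relation of the form $(n_j-1)a_{\vec{n},j}=n_j\,a_{\vec{n}-\vec{e}_j,j}$, which forces $a_{\vec{n},j}$ to equal $n_j$ times a quantity independent of $n_j$.

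The main obstacle is the remaining compatibility condition coupling the different directions. For $r=2$ it reduced to the single identity $G_{m,1}+F_{1,n}=G_{m-1,1}+F_{1,n-1}$, which was solved by introducing a constant $\lambda$ and showing $\lambda=0$; for general $r$ one obtains a system of $\binom{r}{2}$ such pairwise relations among the slopes, and the work is to verify that they are mutually consistent and collapse in the same way, so that all the $\lambda$-type constants vanish. This leaves purely linear coefficients $a_{\vec{n},j}=a_jn_j$ with $a_j\in\R$ and $b_{\vec{n},1}=a_1+|\vec{n}|$. Comparing with the $r$-variable analogue of the normalized recurrence \eqref{rec-3},
\[
xC_{\vec{n}}^{(\vec{a})}(x)=C_{\vec{n}+\vec{e}_1}^{(\vec{a})}(x)+\big(a_1+|\vec{n}|\big)C_{\vec{n}}^{(\vec{a})}(x)+\sum_{j=1}^{r}a_jn_jC_{\vec{n}-\vec{e}_j}^{(\vec{a})}(x),
\]
the coefficients are identified uniquely, and therefore $\{P_{\vec{n}}(x)\}$ coincides with the multiple Charlier polynomials $\{C_{\vec{n}}^{(\vec{a})}(x)\}$, as claimed.
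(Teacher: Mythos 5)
Your strategy is the intended one: the paper states this $r$-variable theorem without proof, its Section~4 argument covering only the case $r=2$, and your plan of rerunning that computation in direction $k=1$ is exactly the natural generalization. The mechanical part of your outline is also correct: applying $\Delta$ to the nearest-neighbour recurrence and comparing coefficients of $P_{\vec{n}-\vec{e}_i}$ and of $P_{\vec{n}-2\vec{e}_j}$ does yield $b_{\vec{n},1}=b_{\vec{n}-\vec{e}_i,1}+1$, hence $b_{\vec{n},1}=|\vec{n}|+b_{\vec{0},1}$, and $(n_j-1)a_{\vec{n},j}=n_ja_{\vec{n}-\vec{e}_j,j}$, hence $a_{\vec{n},j}=n_jf_j$ with $f_j$ independent of $n_j$ but a priori depending on all the other indices.

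However, the step you defer as ``the work is to verify\ldots'' is precisely where the content lies, and leaving it unexecuted is a genuine gap, not routine bookkeeping. Comparing coefficients of $P_{\vec{n}-\vec{e}_i-\vec{e}_j}$ for $i\neq j$ gives $n_ja_{\vec{n}-\vec{e}_j,i}+n_ia_{\vec{n}-\vec{e}_i,j}=n_ia_{\vec{n},j}+n_ja_{\vec{n},i}$, i.e. $f_i-f_i(\cdot-\vec{e}_j)=f_j(\cdot-\vec{e}_i)-f_j$. For $r\geq 3$ both sides of this identity still depend on the $r-2$ remaining indices, so the analogue of the paper's $\lambda$ is a priori a \emph{function} $\lambda_{ij}$ of $n_k$, $k\neq i,j$, not a constant; you must first chase these relations around each triple $(i,j,k)$ to exclude mixed terms and conclude that each $f_i$ is affine, $f_i=c_i+\sum_{j\neq i}\lambda_{ij}(n_j-1)$ with constants satisfying $\lambda_{ij}=-\lambda_{ji}$. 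More seriously, the vanishing of these constants does not follow from the $\Delta$-comparison at all: any antisymmetric family $\{\lambda_{ij}\}$ satisfies every identity it produces (this is already so for $r=2$, where the iteration of $G_{m,1}+F_{1,n}=G_{m-1,1}+F_{1,n-1}$ only shows that the sum depends on $m-n$, so the paper's own ``$\lambda=0$'' step is terse). To close the argument you need an extra input, for instance positivity of the recurrence coefficients of a multiple orthogonal family: $a_{\vec{n},j}=n_j\bigl(c_j+\sum_{i\neq j}\lambda_{ji}(n_i-1)\bigr)>0$ for all $\vec{n}$ forces each $\lambda_{ji}\geq 0$, and antisymmetry then gives $\lambda_{ij}=0$. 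Finally, a minor point you share with the paper: no derived relation ties the free constant $b_{\vec{0},1}$ to $f_1$ (a shift $x\mapsto x+\delta$ preserves the multiple $\Delta$-Appell property while changing $b_{\vec{0},1}$), so the comparison with the Charlier recurrence identifies $P_{\vec{n}}$ with $C_{\vec{n}}^{(\vec{a})}$ only after normalizing this translation.
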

}

\section*{References}


\begin{thebibliography}{1}

\bibitem{angelesco} A. Angelesco, \emph{Sur l'approximation simultannée de plusieurs intégrales définies}, CRAS, {\bf 167} (1918), 629--631.

\bibitem{aptekarev98} A. I. Aptekarev, \emph{Multiple orthogonal polynomials}, J. Comput. Appl. Math. {\bf 99} (1998), 423--447.

\bibitem{aptekarev2003} A. I. Aptekarev, A. Branquinho, W. Van Assche, \emph{Multiple orthogonal polynomials for classical weights} Trans. Amer. Math. Soc, {\bf 355}, (2003) 3887--3914.

\bibitem{arvesu} J. Arves\'u, J. Coussement, W. Van Assche, \emph{Some discrete multiple orthogonal polynomials},
J. Comput. Appl. Math. {\bf 153} (2003), 19--45.

\bibitem{arvesu2} J. Arves\'u, A. M. Ram\'irez-Aberasturis, \emph{On $q$-Charlier multiple orthogonal polynomials}, Symmetry, Integrability and Geometry: Methods and Applications, {\bf 11} (2015).

\bibitem{beckermann} B. Beckermann, J. Coussement, W. Van Assche, \emph{Multiple Wilson and Jacobi-Pi\~{n}eiro polynomials}, J. Approx. Theory {\bf 132} (2005) 155--181.

\bibitem{bender} M. Bender, S. Delvaux, A. B. J. Kuijlaars, \emph{Multiple Meixner-Pollaczek polynomials and the six-vertex model}, \url{arXv:1101.2982v2}.

\bibitem{bruin85} M. G. de Bruin, \emph{Simultaneous Padé approximation and orthogonality}, in 'Polynomes Orthogonaux et Applications' (C. Brezinski et al. eds), Lecture Notes in Mathematics {\bf 1171}, Springer-Verlag, Berlin, 1985, 74--83.

\bibitem{bruin90} M. G. de Bruin, \emph{Some aspects of simultaneous rational approximation}, in 'Numerical Analysis and Mathematical Modeling', Banach Center Publications {\bf 24}, PWN-Polish Scientific Publishers, Warsaw, 1990, 51--83.

\bibitem{carlitz} L. Carlitz, {\em Characterization of certain sequences of orthogonal polynomials}, Portugal. Math., 
{\bf 20} (1961) 43--46.

\bibitem{cheikh} Y. B. Cheikh, A. Zaghouani, \emph{Some discrete $d$-orthogonal polynomials sets}, J. Comput. Appl. Math. {\bf 156} (2003), 253--263.

\bibitem{coussement01} E. Coussement, W. Van Assche, \emph{Some classical multiple orthogonal polynomials}, J. Comput. Appl. Math. {\bf 127} (2001), 317--347.


 \bibitem{haneczok} M. Haneczok, W. Van Assche, \emph{Interlacing properties of zeros of multiple orthogonal polynomials}, J. Math. Anal. Appl. {\bf 339} (2012), 429--438.


\bibitem{ismail2009} M. E. H. Ismail: \emph{Classical and quantum orthogonal polynomials in one variable}, Encyclopedia Math. Appl. {\bf 98}, Cambridge Univ. Press, Cambridge, 2009.


\bibitem{KLS} R. Koekoek, P. A. Lesky, R. F. Swarttouw: \emph{Hypergeometric Orthogonal Polynomials and their $q$-Analogues}, Springer, Berlin, 2010.

\bibitem{lee2008} D. W. Lee, \emph{Difference equations for discrete classical multiple orthogonal polynomials}, J. Approx Theory {\bf 150} (2008) 132--152.

\bibitem{mahler} K. Mahler, \emph{Perfect systems}, Composition math. {\bf 19} (1968), 95--166.

\bibitem{nikishin} A. F. Nikishin, V. N. Sorokin, \emph{Rational Approximants and Orthogonality}, Translations of Mathematical Monographs, {\bf 92}, Amer. Math. Soc., Providence, RI (1991).



\bibitem{rainville} E. D. Rainville, \emph{Special Functions}, The Macmillan Company, New York, (1960).

\bibitem{van} W. Van Assche, \emph{Nearest neighbor recurrence relations for multiple orthogonal polynomials}, J. Approx Theory {\bf 163} (2011) 1427-1448.


\end{thebibliography}
\end{document}